\newcommand{\bn}{\boldsymbol n}
\newcommand{\bx}{\boldsymbol x}
\newcommand{\by}{\boldsymbol y}
\newcommand{\bv}{\boldsymbol v}
\newcommand{\bnu}{\boldsymbol\nu}
\newcommand{\bu}{\boldsymbol u}
\newcommand{\bw}{\boldsymbol w}
\newcommand{\be}{\boldsymbol e}
\newcommand{\bV}{\boldsymbol V}
\newcommand{\bW}{\boldsymbol W}
\newcommand{\bSigma}{\boldsymbol \Sigma}
\newcommand{\ff}{\boldsymbol f}
\newcommand{\blambda}{\boldsymbol \lambda}
\newcommand{\bvarphi}{\boldsymbol \varphi}
\newcommand{\bmu}{\boldsymbol \mu}
\newcommand{\bfeta}{\boldsymbol \eta}
\newcommand{\bfzeta}{\boldsymbol \zeta}
\newcommand{\bfxi}{\boldsymbol \xi}
\newcommand{\srf}{\bfzeta}
\newcommand{\ssrf}{\bfxi}
\newcommand{\bsigma}{\boldsymbol{\sigma}}
\newcommand{\mcS}{\mathcal{S}}
\newcommand{\IR}{\mathbb{R}}
\newcommand{\tnorm}[1]{\left\Vert{\hskip -2.6pt}\left\vert #1 \right\vert{\hskip -2.6pt}\right\Vert}
\newcommand{\tT}{\tilde{T}}
\newcommand{\tF}{\tilde{F}}
\newcommand{\tbx}{\tilde{\bx}}
\newcommand{\tby}{\tilde{\by}}
\newcommand{\tbvarphi}{\tilde{\bvarphi}}
\newcommand{\tvarphi}{\tilde{\varphi}}
\numberwithin{equation}{section}
\newtheorem{prop}{Proposition}[section]
\newtheorem{thm}{Theorem}[section]
\newtheorem{rem}{Remark}[section]
\newtheorem{assumption}{Assumption}[section]
\newtheorem{cor}{Corollary}[section]
\newenvironment{proof}{\noindent \newline {\bf Proof.}}
{\hfill \mbox{\fbox{} } \newline}
\begin{document}
\title{Cut finite element method for divergence free approximation of incompressible flow: a Lagrange multiplier approach}
\author[UK]{Erik Burman}
\address[UK]{Department of Mathematics, University College London, London, WC1E  6BT,  United Kingdom} 
\ead{e.burman@ucl.ac.uk} 
\author[Jon]{Peter Hansbo}
\address[Jon]{ Department of Materials and Manufacturing, J\"onk\"oping University, SE-55111 J\"onk\"oping, Sweden }
\ead{peter.hansbo@ju.se} 
\author[Ume]{Mats G.\ Larson}
\address[Ume]{Department of Mathematics and Mathematical Statistics, Ume{\aa} University, SE-90187 Ume{\aa}, Sweden} 
\ead{mats.larson@umu.se} 
\begin{abstract}
In this note we design a cut finite element method for a low order divergence free element applied to a boundary value problem subject to Stokes' equations. For the imposition of Dirichlet boundary conditions we consider either Nitsche's method or a stabilized Lagrange multiplier method. In both cases the normal component of the velocity is constrained using a multiplier, different from the standard pressure approximation. The divergence of the approximate velocities is pointwise zero over the whole mesh domain, and we derive optimal error estimates for the velocity and pressures, where the error constant is independent of how the physical domain intersects the computational mesh, and of the regularity of the pressure multiplier imposing the divergence free condition.
\end{abstract}
\begin{keyword}
compatible finite elements \sep incompressibility \sep CutFEM \sep ficitious domain \sep Stokes' equations \sep
Lagrange multipliers
\end{keyword}
\maketitle
\section{Introduction}

We consider the Stokes' equations of creeping incompressible flow with homogeneous Dirichlet boundary conditions. 
In this work the aim is to develop a robust and accurate cut finite element method \cite{UCL17, BCHLM15} for the approximation of the Stokes' equations using pointwise divergence free velocities. This means that the computational mesh does not respect the physical geometry, but can cut it in a quite general fashion. The combination of divergence free approximation spaces and CutFEM is known to be problematic due to the coupling of velocity and pressure on the boundary and the perturbations of the incompressibility induced by so called ghost penalty terms that are frequently used to enhance the stability of cut discretizations \cite{Bu10,MLLR14,BHL22}. Unfitted finite element methods for the interface problem in incompressible elasticity was introduced in \cite{BBH09} using Nitsche's method and further developed in \cite{HLZ14, CFGSZ15}. Fictitious domain methods for the Stokes' problem using cut elements were introduced in \cite{BH14,MLLR14}. For an analysis of inf sup stability of unfitted FEM we refer to \cite{GO18}. Unfitted FEM using stabilized Lagrange multiplier methods for Stokes' was discussed in \cite{FL17}, drawing on earlier results from  \cite{HR09} and \cite{BH10}. None of these references treat the case of pointwise divergence free approximation. Only in the recent paper \cite{LNO21} an unfitted finite element method was proposed in this context using Nitsche's method and ghost penalty. Optimal error estimates where shown and also that the solution was pointwise divergence free in the interior, but not up to the boundary. The pressure regularity also polluted the error constant unlike what one expects when using divergence free approximation. For a Darcy flow interface problem, in \cite{FHNZ22}, a new variant of ghost penalty was introduced to allow for cut elements and divergence free approximation. In this case the divergence free condition was satisfied globally.

In this contribution, we will entirely avoid ghost penalty for the velocity and pressure approximations. The idea is to instead impose the divergence-free condition over the whole computational mesh. Hence there is no need for a penalty on the pressure. The boundary condition is applied either using Nitsche's method or a stabilized Lagrange multiplier technique of interior penalty type. In either case, the pressure term typically appearing on the boundary is decoupled from the multiplier imposing the divergence-free constraint. Either as a normal stress variable or as a separate boundary pressure. This makes the imposition of the divergence-free constraint the sole purpose of the pressure variable, unlike the classical Nitsche method for Stokes' equations, where it also appears as a multiplier for the normal velocity component. To ensure pointwise divergence-free approximation this coupling must be broken \cite{BP21, FHNZ22}, resulting in a lack of skew symmetry of the pressure velocity coupling for Nitsche's method. In our Nitsche variant, the bulk and boundary pressures are represented by different variables and hence completely decoupled. The Lagrange multiplier method that we propose is similar to the standard CutFEM using stabilized Lagrange multipliers \cite{BH10, FL17}. For both cases, the key modification is that the velocity pressure coupling terms are integrated over the whole mesh domain; that is, these integrals are not restricted to the physical domain and do not contain any cut elements. This results in a conformity error for the pressure approximation that can be shown to only affect the pressure locally in the case of divergence-free approximation.
Hence a globally accurate pressure approximation can be constructed using post-processing with nearest neighbor extension. To keep down the technicalities, we will work with the minimal element introduced in \cite{CH18}, but with some added effort, the ideas carry over to other divergence-free spaces.
\section{Model problem: the Stokes' equations}\label{sec:models}
Let $\Omega$ be an open subset of $\IR^d$ with smooth, \textcolor{black}{non self-intersecting}, boundary $\Gamma$. Let $\bn_\Gamma$ denote the outward pointing normal on $\Gamma$. We look for a
velocity-pressure couple $(\bu,p) \in \bV_0 \times
Q$, where $\bV_0:= [H^1_0(\Omega)]^d$ and $Q:=L^2_0(\Omega)$ denotes the set of square
integrable functions with mean zero, such that
\begin{equation}\label{eq:brinkman}
\begin{array}{rcl}
-\Delta \bu + \nabla p  &=& \ff \mbox{ in } \Omega, \\
\nabla \cdot \bu & = & 0  \mbox{ in } \Omega,\\
 \bu &=& 0\mbox{ on } \Gamma.
\end{array}
\end{equation}
Here $\ff \in [L^2(\Omega)]^d$. The weak formulation can be written, find $(\bu,p) \in \bV_0 \times Q$ such that
\begin{equation}\label{eq:weakStokes}
a(\bu,\bv) - b(p,\bv) + b(q,\bu) = l(\bv),\mbox{ for all } (\bv,q) \in \bV_0\times
Q,
\end{equation}
where
\begin{equation}\label{eq:rhsB}
l(\bv) := \int_\Omega \ff \cdot \bv ~\mbox{d}x,
\end{equation}
\[
a(\bw,\bv) := \int_\Omega  \nabla \bw : \nabla \bv~\mbox{d}x, \mbox{ with } \nabla \bw : \nabla \bv := \sum_{i=1}^d \nabla w_i \cdot \nabla v_i
\]
and
\[
b(q,\bv):= \int_\Omega q \nabla \cdot \bv ~\mbox{d}x.
\]
The formulation \eqref{eq:weakStokes} admits a unique solution with additional regularity
\begin{equation}
\|\bu\|_{[H^2(\Omega)]^d} + \|p\|_{H^1(\Omega)} \lesssim \|\ff\|_\Omega.
\end{equation}
Here and below we use the notation $a \lesssim b$ for $a \leq C b$, where $C$ is a constant independent of the local mesh size and the mesh-domain configuration.

We will consider two formulations for the discretization of (2.1). In both cases, the boundary condition will be imposed weakly. To separate the effect of the pressure in the bulk from its interaction with the velocity on the boundary, we introduce a Lagrange multiplier that represents the normal stress on the boundary or the pressure part thereof. This can be done in different ways on the discrete level; but,  to give details on the continuous level, it is convenient to introduce a multiplier for all components of the stress. Therefore we will instead look for $\bu \in \bV:=[H^1(\Omega)]^d$ and the multiplier expressing the fluid stress on the solid wall $\blambda \in \bSigma := [H^{-\frac12}(\Gamma)]^d$. Formally, if $\nabla \bu$ is the matrix with columns $\nabla u_i$, $i=1,\hdots,d$,
\[
\blambda =  -( \nabla \bu)^T \bn_\Gamma  + p \bn_\Gamma \mbox{ on } \Gamma.
\]

The corresponding weak formulation reads: find $(\bu,p,\blambda) \in \bV\times
Q \times\bSigma$ such that:
\[
A[(\bu,p,\blambda),(\bv,q,\bmu)] = l(\bv),\mbox{ for all } (\bv,q,\bmu) \in \bV\times
Q\times \bSigma.
\]
Here the bilinear form $A$ is given by
\begin{equation}\label{eq:bilin_Brinkman}
A[(\bu,p,\blambda),(\bv,q,\bmu)] :=a(\bu,\bv) - b(p,\bv) + b(q,\bu) +c(\blambda,\bv) - c(\bu,\bmu)
\end{equation}
with
\[
 c(\bmu, \bv) = \int_{\Gamma} \bv\cdot  \bmu ~\mbox{d}s.
\]
Unique existence of $\bu$ is ensured through the application of the
Lax-Milgram lemma in the space  $
\bV_0 \cap H^{div}_0$, where
\[
H^{div}_0:= \{\bv \in \bV: \nabla \cdot \bv = 0\}.
\]
A unique bulk pressure $p$ and boundary force $\blambda $ is then guaranteed by the
Ladyzhenskaya-Babuska-Brezzi condition \cite{BF91}.
\section{The finite element space}\label{sec:FEMconst}
To define the unfitted finite element method we let $S$ denote an open polytopal domain such that $\bar \Omega \subset S$ and $\mbox{dist}(\partial S, \Gamma) = 1$.
Let $\mathcal{T}_S$ denote a quasi-uniform, conforming, shape regular tesselation 
of simplexes $T$ of $S$. We let $h_T:= \mbox{diam}(T)$. Then we extract the set of elements intersecting the physical domain 
\[
\mathcal{T}_h := \{T \in \mathcal{T}_S \mbox{ such that } \Omega \cap T \ne \emptyset\}.
\]
We define the index $h$ to be the mesh parameter, $h := \max_{T \in \mathcal{T}_h} h_T$.

To distinguish the elements intersected by $\Gamma$ from those in the bulk of $\Omega$ we define
\[
\mathcal{T}_\Gamma := \{T \in \mathcal{T}_h : T \cap \Omega \ne \emptyset\} \mbox{ and } \mathcal{T}_I := \mathcal{T}_h \setminus  \mathcal{T}_\Gamma.
\]
The domains covered by the simplexes in the different sets are then denoted
\[
\Omega_\mathcal{T} := \cup_{T \in \mathcal{T}_h} T,\quad \Omega_\Gamma := \cup_{T \in \mathcal{T}_\Gamma} T,\quad \Omega_I := \cup_{T \in \mathcal{T}_I} T.
\]
The boundary of the mesh domain is denoted $\partial \Omega_\mathcal{T}$.
Clearly $\Omega\subset \Omega_\mathcal{T}$.
\textcolor{black}{Since $\Gamma$ is smooth there exists a tubular neighbourhood of $\Gamma$, $\Gamma \subset U_\delta$ with thickness $\delta$, \textcolor{black}{that does not self intersect}. We assume that $h<\delta$, so that $\Omega_{\Gamma} \subset U_\delta$.}
We denote the set of faces of the
simplexes in the set $\mathcal{T}_X$ by $\mathcal{F}(\mathcal{T}_X)$ and the subset of interior faces $\mathcal{F}_i(\mathcal{T}_X)$, that is faces such that $F = T \cap T'$ for $T,\, T' \in \mathcal{T}_X$.
We let $Q_h$ denote the space of functions in $L^2(\Omega)$ that are
constant on each element,
\[
Q_h := \{x \in L^2(\Omega_\mathcal{T}) : x\vert_T \in \mathbb{P}_0(T); \forall T
\in \mathcal{T}_h \}.
\]
We let $\bW_h$ denote the space of
vectorial piecewise affine functions on $\mathcal{T}_h$,
\[
\bW_h := \{v \in [H^1(\Omega_\mathcal{T})]^d: v\vert_T \in  [\mathbb{P}_1(T)]^d; \forall T
\in \mathcal{T}_h \}.
\] 
To define a finite element space for the Lagrange multiplier we 
define the extended trace space as the restriction of $Q_h$ to the elements in $\mathcal{T}_\Gamma$,
\[
\Sigma_h := \{x \in L^2(\Omega_\Gamma) : x\vert_T \in \mathbb{P}_0(T); \forall T
\in \mathcal{T}_\Gamma \} \mbox{ and } \bSigma_h = [\Sigma_h]^d.
\]
It is well known that the space $\bW_h$
is not robust for nearly incompressible elasticity and that the
velocity-pressure space $\bW_h \times Q_h$ is unstable for
incompressible flow problems on general meshes. 
To rectify this we will enrich the space with vectorial piecewise affine bubbles, defined on a subgrid, following the design in \cite{CH18, BCH20}, that allows us to remain
conforming in $H^1$. The bubbles then allow us to define degrees of freedom on the faces. This results in an extended space, that we will denote $\bV_h$.  We provide a detailed 
construction of the finite element space, in $d$ dimensions, in Appendix \ref{sec:construction-element}. The 
degrees of freedom are the vectorial velocities in the vertices of the macro elements and the velocity component in each face, $F = T \cap T'$,  in the direction pointing from the barycenter of triangle $T$ to that of $T'$.
We will apply $\bV_h \times Q_h$ in the finite element method for the Stokes' equations. \textcolor{black}{Observe that by
construction all functions $\bv_h \in \bV_h$ satisfy $\nabla \cdot
\bv_h \in Q_h$ and thus the divergence is constant on each macro element.}

We will need to bound quantities on $\Gamma$ using quantities in the bulk. To this end we recall the
following trace inequality \cite{WX19}, for all $v \in H^1(T)$ there holds 
\begin{equation}\label{eq:trace}
\|v\|_{\partial T} + \|v\|_{\Gamma \cap T} \lesssim h^{-\frac12} \|v\|_T + h^{\frac12} \|\nabla v\|_T.
\end{equation}

\subsection{Interpolants and approximation estimates}
We recall from \cite{BCH20} the following interpolant that commutes with the divergence operator.
For every $\bv \in [H^1(\Omega_{\mathcal{T}})]^d$ there exists $\pi_h \bv \in \bV_h$ such that
$\pi_h \bv(x_i) =i_h \bv(x_i)$ in the vertices $x_i$ of the macro elements, where $i_h$ denotes the Cl\'ement
interpolant on $\bW_h$, and for all $F \in \mathcal{F}$
\[
\int_F \pi_h \bv \cdot \bn_F ~\mbox{d}s = \int_F \bv \cdot \bn_F ~\mbox{d}s.
\]
Note that the interpolant $\pi_h \bv$ satisfies the approximation error estimate, for all $\bv \in H^l(\Omega_{\mathcal{T}})$,
\begin{equation}\label{eq:approx_glob}
\|\pi_h \bv - \bv\|_{\Omega_{\mathcal{T}}}
\lesssim h |\bu|_{[H^1(\Omega_{\mathcal{T}})]^d}, \quad h \|\nabla(\pi_h \bu - \bu)\|_{\Omega_{\mathcal{T}}} +  \|\pi_h \bu - \bu\|_{\Omega_{\mathcal{T}}}
\lesssim h^2 |\bu|_{[H^2(\Omega_{\mathcal{T}})]^d}.
\end{equation}
The proof of the existence of $\pi_h$ is identical to that of the
interpolant for the Bernardi-Raugel element \cite{BR85}, see also \cite{BCH20}. 
Since we are interested in unfitted finite element methods we recall from \cite{LNO21} a stable divergence free extension from $\Omega$ to $S$ that we denote by $\bu^e$. It was shown that $\bu^e$ satisfies the stability
\begin{equation}\label{eq:stab_divex}
\|\bu^e\|_{[H^{l}(S)]^d} \lesssim \|\bu\|_{[H^{l}(\Omega)]^d}, \quad l=1,2. 
\end{equation}
It follows that
\begin{equation}\label{eq:approx}
\begin{array}{rcl}
 \|\pi_h \bu - \bu\|_{\Omega}
\lesssim  h |\bu^e|_{[H^1(\Omega_{\mathcal{T}})]^d} &\leq& C_1 h \|\bu\|_{[H^1(\Omega)]^d}, \\[3mm]
 h \|\nabla(\pi_h \bu - \bu)\|_{\Omega} +  \|\pi_h \bu - \bu\|_{\Omega}
\lesssim  h^2 |\bu^e|_{[H^2(\Omega_{\mathcal{T}})]^d}&\leq& C_2 h^2 \|\bu\|_{[H^2(\Omega)]^d},\\[3mm]
\|h^{-\frac12}(\pi_h \bu - \bu)\|_{\Gamma}\lesssim  h |\bu^e|_{[H^2(\Omega_{\mathcal{T}})]^d} &\leq& C_3 h \|\bu\|_{[H^2(\Omega)]^d}
\end{array}
\end{equation}
where the last inequality follows by applying \eqref{eq:trace} on each element in $\mathcal{T}_\Gamma$ followed by the second inequality.
For the pressure variable, we will in principle extend by zero below and use standard element wise projections on piecewise constants in the bulk. We will also extend the multiplier variable $\blambda$ so that we can use approximation on the bulk mesh. For every $\bmu \in [H^{\frac12}(\Gamma)]^d$ there exists $\bmu^e \in [H^1(U_\delta)]^d$ with $\|\bmu^e\|_{[H^1(U_\delta)]^d} \lesssim \|\bmu\|_{[H^{\frac12}(\Gamma)]^d}$ (harmonic extension). Since $h<\delta$ and $\Omega_{\Gamma} \subset U_\delta$ we can define the approximation of $\bmu$ by $(\pi_\Gamma \bmu^e) \vert_{\Gamma} $,  where $\pi_\Gamma:L^2(\Omega_\Gamma) \rightarrow \bSigma_h$ is the standard $L^2$-projection on piecewise constant functions in $\bSigma_h$. Then there holds
\begin{equation}\label{eq:mulitplier_approx}
\|h^{\frac12}(\bmu^e - \pi_\Gamma \bmu^e)\|_{\mathcal{F}_i(\mathcal{T}_\Gamma)} + \|h^{\frac12}(\bmu - \pi_\Gamma \bmu^e)\|_\Gamma \lesssim h \|\bmu\|_{[H^{1/2}(\Gamma)]^d}.
\end{equation}
The inequality \eqref{eq:mulitplier_approx} also follows by applying \eqref{eq:trace} on each element in $\mathcal{T}_\Gamma$ followed by standard error estimates for $\pi_\Gamma$ and finally the stability of the extension.

Below we will make use of the local average of a function $y$ over some subdomain $X \subset  \Omega_{\mathcal{T}}$ we will use the following definition,
\[
\bar y^X := |X|^{-1} \int_X y ~\mbox{d}x, \mbox{ where } |X|:= \int_X ~\mbox{d}x.
\]
\section{Finite element discretization of the model problem}\label{sec:disc}
We consider the finite element spaces $\bV_h,\,Q_h$,  that were defined in the previous section,  and we will propose two different approaches to the weak imposition of boundary conditions. First a full Lagrange multiplier method where the normal stress on the boundary is introduced as an independent variable and then a Nitsche type method, where the pressure on the boundary is decoupled from the pressure in the bulk in the form of an additional multiplier controlling the normal component of the velocity. The tangential component of the velocity is then imposed using Nitsche's method. The former method, which stays very close to the continuous problem, has fewer terms that need to be evaluated and only one user defined parameter, while the latter has fewer unknowns, since only one multiplier field is added, but requires the evaluation of more boundary terms and three different stabilization parameters need to be set. Both methods can be analysed in the same fashion 
and we will give full detail only for the Lagrange multiplier method. The modifications necessary for the analysis of the Nitsche type method will be outlined in remarks.

\subsection{A Lagrange multiplier method}\label{sec:lagange}
In the first method simply discretize the form \eqref{eq:bilin_Brinkman} directly,  this means adding three unknowns for the normal stress on $\Gamma$. In this case only the Lagrange multiplier needs to 
be stabilized. The
finite element discretization of the problem  \eqref{eq:bilin_Brinkman} takes the form find $(\bu_h,p_h, \blambda_h) \in \bV_h \times Q_h \times \bSigma_h$ such
that
\begin{equation}\label{eq:FEM_brinkman}
A_h[(\bu_h,p_h,\blambda_h),(\bv_h,q_h,\bmu_h)] = l(\bv_h),\mbox{ for all } (\bv_h,q_h,\bmu_h) \in \bV_h\times
Q_h\times \bSigma_h.
\end{equation}
where the bilinear form is now defined by
\[
\begin{aligned}
A_h[(\bu_h,p_h,\blambda_h),(\bv_h,q_h,\bmu_h)] & := a(\bu_h,\bv) - b_h(p_h,\bv_h) + b_h(q_h,\bu_h) \\
&\qquad  + c(\blambda_h,\bv_h) - c(\bmu_h,\bu_h) + \gamma j(\blambda_h ,\bmu_h)
\end{aligned}
\]
where $\gamma>0$ is a stabilization parameter.
The modified form with subscript $h$ is defined by
\[
b_h(q_h,\bv_h) := (q_h,\nabla \cdot \bv_h)_{\Omega_\mathcal{T}}.
\]
We also introduce the vector valued stabilization term
\begin{equation}\label{eq:vec_stab}
j(\bv,\bw) := \sum_{F \in \mathcal{F}_i(\mathcal{T}_\Gamma)} \int_{F} h_T [\bv]\cdot [\bw]~\mbox{d}s.
\end{equation}
Here $[x]\vert_F$ denotes the jump of $x$ over face $F$. For future reference  we define $[x]\vert_F=x\vert_F$ on all $F$ in the mesh boundary, i.e. such that $F \cap \partial \Omega_{\mathcal{T}} = F$.

\subsection{A Nitsche method}
Here the boundary conditions on the velocities on $\Gamma$ are imposed using Nitsche's method. The velocity is enforced to be divergence free everywhere in $\Omega_\mathcal{T}$ through a bulk pressure variable and an additional unknown $\varrho$ is introduced for the pressure term on the boundary.
This unknown is a Lagrange multiplier imposing $\bu \cdot \bn_\Gamma = 0$. We formally define the discrete flux,
\[
\bsigma_h(\bv_h,\varpi_h) =   (\nabla \bv_h)^T \bn_\Gamma- \frac{\gamma_0}{2h} \bv_h  - \varpi_h \bn_\Gamma,
\]
where $\gamma_0 \in \mathbb{R}^+$ is a parameter that must be chosen large enough.
Replacing $\bu$, $p$, $\bv$ and $q$ in \eqref{eq:bilin_Brinkman} by $\bu_h$, $p_h$, $\bv_h$ and $q_h$ and using the above definition of $\bsigma_h(\bu_h,\varrho_h)$ and
 $\bsigma_h(\bv_h,-\varpi_h)$ instead of $\blambda$ and $\bmu$ we propose the formulation find $(\bu_h,p_h,\varrho_h) \in \bV_h\times
Q_h \times\Sigma_h$ such that:
\begin{equation}\label{eq:Nit}
A_{Nit}[(\bu_h,p_h,\varrho_h),(\bv_h,q_h,\varpi_h)] = l(\bv_h),\mbox{ for all } (\bv_h,q_h,\varpi_h) \in \bV_h\times
Q_h\times \Sigma_h
\end{equation}
where the bilinear form is defined by
\[
\begin{aligned}
&A_{Nit}[(\bu_h,p_h,\varrho_h),(\bv_h,q_h,\varpi_h)] := a(\bu_h,\bv) - b_h(p_h,\bv_h) + b_h(q_h,\bu_h) \\
&\qquad\qquad  - c(\bsigma_h(\bu_h,\varrho_h),\bv_h) - c(\bsigma_h(\bv_h,-\varpi_h),\bu_h) + \gamma_1 j(\varrho_h ,\varpi_h) + \gamma_2 g(\bu_h ,\bv_h).
\end{aligned}
\]
The stabilization terms are given by
\[
j(y_h ,z_h) := \sum_{F \in \mathcal{F}_i(\mathcal{T}_\Gamma)} \int_{F} h_T [y_h] [z_h]~\mbox{d}s
\]
and
\[
 g(\bu_h ,\bv_h):= \sum_{F \in \mathcal{F}(\mathcal{T}_\Gamma)} \int_{F\setminus \partial \Omega_{\mathcal{T}}} h_T [\nabla \bu_h]: [\nabla \bv_h]~\mbox{d}s.
\]
The stabilization operator $j$ is necessary to stabilize the Lagrange multiplier and $g$ is a ghost penalty term that is needed to make Nitsche's method robust independently of the mesh element intersection. Observe that compared to the method proposed in \cite{LNO21} the term $b_h$ acts on the whole mesh domain $\Omega_\mathcal{T}$. The pressure appearing in the stress approximation on the other hand is an independent variable and not coupled to the bulk pressure. Note that the above formulation includes three penalty parameters, one for the penalty term in Nitsche's method, $\gamma_0$, one for the ghost penalty $\gamma_1$ and one to stabilize the boundary pressure variable, $\gamma_2$. 

\begin{rem}\label{rem:ghost}(Ghost penalty)
The formulation \eqref{eq:FEM_brinkman} does not have any ghost penalty term for the velocities and since the pressure velocity coupling is integrated over the whole mesh domain none is needed for the pressure or the divergence of the velocities. Optimal error estimates are obtained below also in the absence of ghost penalty. However,  the linear system can be very ill-conditioned. This requires either a weakly consistent stabilizing term, whose only design condition is that \textcolor{black}{$g(\pi_h \bv,\pi_h \bv)^{\frac12} \lesssim h \|\bv\|_{[H^2(\Omega)]^d}$}, or an efficient preconditioner. The formulation \eqref{eq:Nit} on the other hand requires control of the gradient of $\bu_h$ over the whole mesh domain to counter the well known instability of Nitsche's method on unfitted meshes \cite{BCHLM15} leading to the need of a ghost penalty term in standard fashion. Since this term only acts on the velocity variable it does not perturb the divergence free property. Finally, in both cases the stabilization of the Lagrange multiplier acts on the bulk faces and therefore it both stabilizes the multiplier and provides stability for small cuts (see \cite{BH10}). 
\end{rem}

\begin{rem}(Average pressure)
Note that in the space $Q_h$ we have not imposed the zero average condition on the pressure. This is indeed not necessary, as we shall see below, since an artificial homogeneous Dirichlet condition is imposed on the pressure variable on $\Omega_{\mathcal{T}}$ (c.f. Remark \ref{rem:norm} and Theorem \ref{thm:infsup}).
This fixes the constant as a function of $\bu_h$ and the boundary multiplier $\blambda_h$ or $\rho_h$. This artificial boundary condition destroys the accuracy in the cut elements, but does not influence the accuracy on the interior elements. Therefore a globally accurate pressure with zero average can be constructed using post processing (c.f. equation \ref{eq:press_ext} and Corollary \ref{cor:cont_error}).
\end{rem}
\section{Stability and error analysis}\label{sec:estimate}
For the stability analysis it will be convenient to introduce the triple norm
\[
\tnorm{\bv,q,\bmu}:= \|\bv\|_{1,h}  + \|q\|_{0,\mathcal{T}_h} + \|h^{\frac12} \bmu\|_{\Gamma}  + j(\bmu,\bmu)^{\frac12},
\]
where 
\[
 \|\bv\|_{1,h} :=  \|\nabla \bv\|_{\Omega} + \|h^{-\frac12} \bv\|_{\Gamma}+ \|\nabla \cdot \bv\|_{\Omega_\mathcal{T}},
\]
and
\[
\|q\|_{0,\mathcal{T}_h}^2 := \|q - \bar q^{\Omega_\mathcal{T}}\|_{\Omega_\mathcal{T}}^2 + \|h^{\frac12} [q]\|_{\mathcal{F}(\mathcal{T}_h)}^2.
\]
\begin{rem}\label{rem:norm}
Observe that $\|\cdot\|_{0,\mathcal{T}_h}$ is a norm. This is easily seen since if $q = \bar q^{\Omega_\mathcal{T}}$, then $\|h^{\frac12} [q]\|_{\mathcal{F}(\mathcal{T}_h)} = \|h^{\frac12} \bar q^{\Omega_\mathcal{T}}\|_{\partial \Omega_{\mathcal{T}}}$ and hence $\|\cdot\|_{0,\mathcal{T}_h} = 0$ only if $\bar q^{\Omega_\mathcal{T}} = 0$. For the Lagrange multiplier method we have control of the full $H^1$-norm of the velocity only in $\Omega$. An extension to the mesh domain $\Omega_{\mathcal{T}}$ can be obtained in a standard fashion using ghost penalty. This is needed in the Nitsche method both in the error analysis and for the conditioning of the linear system, whereas in the Lagrange multiplier method the role of ghost penalty is only to enhance the conditioning of the system and could in principle be replaced by preconditioning.
\end{rem}

\subsection{Stability of the Lagrange multiplier}
The key difficulty in the present stability analysis is to obtain sufficient control of the Lagrange multiplier for the boundary condition without perturbing the pressure stability.  First we introduce four assumptions, in the spirit of \cite{Bur14}, that will ensure the stability of the Lagrange multiplier. Then we show that the assumptions of the abstract result are satisfied by the low order spaces and stabilization introduced in Section \ref{sec:FEMconst} and \ref{sec:disc}. 
We will also show that the Lagrange multiplier method gives similar control of the trace of $\bu$ as Nitsche's method. 
Note that we expect the proposed ideas to work also for higher order spaces provided the assumptions below are satisfied.
{\color{black}
\begin{assumption}\label{ass:velpress}
For all $q \in Q_h$ there exists $\bw_q \in \bV_h$ such that
\[
\|q\|_{0,\mathcal{T}_h}^2 \lesssim -b_h(q,\bw_q) \mbox{ and } \|\bw_q\|_{1,\Omega_{\mathcal{T}_h}} \lesssim \|q\|_{0,\mathcal{T}_h}.
\]
\end{assumption}
}
\begin{assumption}\label{ass:mult_stab}
There exists $\bSigma_H \subset \bSigma_h$, $H > h$, with the $L^2(\Gamma)$-projection $\pi_{H}:\bSigma_h \rightarrow  \bSigma_H$ and that there exists $C_\lambda, C_{H/h} > 0$, both depending on $H/h$ and the local mesh-geometry, such that for all $\bmu \in \bSigma_h$ there exists $\bvarphi \in \bV_h$ such that
\begin{equation}\label{eq:lam_infsup}
 \|h^{\frac12} \bmu \|^2_{\Gamma} \leq  c(\bmu, \bvarphi) + C_\lambda \|h^{\frac12} (\bmu- \pi_{H} \bmu)\|^2_\Gamma.
\end{equation}
and 
\begin{equation}\label{eq:var_stab}
\|\nabla \bvarphi\|_{\Omega_\mathcal{T}}^2 \leq C_{H/h} \|h^{\frac12} \bmu \|_\Gamma^2 \mbox{ and } \|h^{-\frac12} \bvarphi\|_{\Gamma}^2 \leq C_\lambda \|h^{\frac12} \bmu \|_\Gamma^2
\end{equation}
where $C_{H/h}$ can be made arbitrarily small by choosing $H/h$ large.
\end{assumption}
\begin{assumption}\label{ass:stab_op}
The stabilization operator $j:\bSigma_h \times \bSigma_h \rightarrow \mathbb{R}$ satisfies for all $\bmu \in \bSigma_h$
\begin{equation}\label{eq:lamvar_stab}
\|h^{\frac12} (\bmu - \pi_{H} \bmu)\|_{\textcolor{black}\Gamma}^2 \leq C_s j(\bmu,\bmu)
\end{equation}
where $C_s$ depends on $H/h$.
\end{assumption}
\begin{assumption}\label{ass:trac}
Assume that there exists $C_u > 0$, depending on $H/h$ and the local mesh-geometry, such that for all $\bv \in \bV_h$ there exists $\ssrf \in \bSigma_h$ such that
\begin{equation}\label{eq:vtrace_stab1}
\|h^{-\frac12} \bv\|_\Gamma^2 \leq -c(\ssrf,\bv) + C_u \|\nabla \bv\|_\Omega^2
\end{equation}
and 
\begin{equation}\label{eq:vtrace_stab2}
\|h^{\frac12} \ssrf\|_\Gamma^2  + j(\ssrf,\ssrf) \leq C_u \|h^{-\frac12} \bv\|^2_\Gamma.
\end{equation}
\end{assumption}
{\color{black}{
\begin{rem}
We prove below that the above assumptions are satisfied by the element introduced in Section \ref{sec:FEMconst}, but can also be shown to hold for the Raviart-Thomas or Brezzi-Douglas-Marini spaces if the $H^1$-nonconformity is handled appropriately. 
\end{rem}
\begin{prop}
The velocity pressure pair $\bV_h \times Q_h$ satisfies Assumption \ref{ass:velpress}.
\end{prop}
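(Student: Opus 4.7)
The plan is to construct the velocity $\bw_q$ as a sum $\bw_q = \bw_0 + \bw_1$ of two contributions, each controlling one of the two terms in the definition
\[
\|q\|_{0,\mathcal{T}_h}^2 = \|q-\bar q^{\Omega_{\mathcal T}}\|_{\Omega_{\mathcal T}}^2 + \|h^{\frac12}[q]\|_{\mathcal{F}(\mathcal{T}_h)}^2,
\]
exploiting the two kinds of degrees of freedom in $\bV_h$ (vertex values/Cl\'ement part, and face-bubble part).

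For the bulk part I would set $q_0 := q - \bar q^{\Omega_{\mathcal T}}$, which has zero mean on the polyhedral Lipschitz domain $\Omega_{\mathcal T}$, and invoke the continuous surjectivity of divergence (e.g.\ Bogovskii) to produce $\bv_0 \in [H^1_0(\Omega_{\mathcal T})]^d$ with $\nabla\cdot\bv_0 = -q_0$ and $\|\bv_0\|_{H^1(\Omega_{\mathcal T})} \lesssim \|q_0\|_{\Omega_{\mathcal T}}$. Define $\bw_0 := \pi_h\bv_0$. Because $\pi_h$ preserves face averages of normal components and $Q_h$ is piecewise constant, the divergence theorem applied elementwise yields the Fortin commuting property $(r_h,\nabla\cdot\pi_h\bv)_{\Omega_{\mathcal T}} = (r_h,\nabla\cdot\bv)_{\Omega_{\mathcal T}}$ for all $r_h \in Q_h$. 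Testing with $r_h = q$ and using $\int_{\Omega_{\mathcal T}} q_0 = 0$ gives $-b_h(q,\bw_0) = \|q_0\|_{\Omega_{\mathcal T}}^2$, and the standard $H^1$-stability of the interpolant provides $\|\bw_0\|_{1,\Omega_{\mathcal{T}_h}} \lesssim \|q_0\|_{\Omega_{\mathcal T}}$.

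For the jump part I would use the face-bubble enrichment of $\bV_h$: for each face $F \in \mathcal{F}(\mathcal{T}_h)$ (interior and boundary) there is a basis function $\boldsymbol{\psi}_F \in \bV_h$ supported in the element patch sharing $F$, with normalized normal flux $\int_F \boldsymbol{\psi}_F \cdot \bn_F\,ds = |F|$ and scaling $\|\boldsymbol{\psi}_F\|_{H^1(\Omega_{\mathcal T})}^2 \lesssim h^{d-2}$. Setting $\bw_1 := -\sum_{F} h_F [q]_F\,\boldsymbol{\psi}_F$ and integrating by parts elementwise, the fact that $\nabla q \equiv 0$ on each $T$ gives
\[
(q,\nabla\cdot\bw_1)_{\Omega_{\mathcal T}} = \sum_{F \in \mathcal{F}(\mathcal{T}_h)} [q]_F \int_F \bw_1\cdot\bn_F\,ds = -\sum_F h_F\,[q]_F^2\,|F| = -\|h^{\frac12}[q]\|_{\mathcal{F}(\mathcal{T}_h)}^2,
\]
using the convention $[q]|_F = q|_F$ on boundary faces. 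The scaling of $\boldsymbol{\psi}_F$ together with the finite-overlap of the supports gives $\|\bw_1\|_{1,\Omega_{\mathcal{T}_h}}^2 \lesssim \sum_F h_F^2 [q]_F^2 h^{d-2} \sim \|h^{\frac12}[q]\|_{\mathcal{F}(\mathcal{T}_h)}^2$.

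Combining, $\bw_q := \bw_0 + \bw_1$ satisfies $-b_h(q,\bw_q) = \|q\|_{0,\mathcal{T}_h}^2$ and $\|\bw_q\|_{1,\Omega_{\mathcal{T}_h}} \lesssim \|q\|_{0,\mathcal{T}_h}$. The main obstacle will be the bookkeeping of scaling constants for the face-bubble construction, making sure they are independent of how $\Gamma$ cuts the mesh (which they are, since $\bw_1$ only sees mesh quantities); a secondary subtlety is that on boundary faces of $\partial\Omega_{\mathcal T}$ the jumps collapse to traces, which, combined with the Bogovskii corrector $\bw_0$ having vanishing face averages on $\partial\Omega_{\mathcal T}$, is precisely what enables the jump term to control the constant part $\bar q^{\Omega_{\mathcal T}}$ and justifies the absence of a zero-mean constraint in $Q_h$.
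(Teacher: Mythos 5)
Your proof is correct and follows essentially the same route as the paper: a two-part test function, one piece realizing $\nabla\cdot\bw = \bar q^{\Omega_{\mathcal T}}-q$ in $H^1_0(\Omega_{\mathcal T})$ to control $\|q-\bar q^{\Omega_{\mathcal T}}\|_{\Omega_{\mathcal T}}$, and one piece built from the face-normal degrees of freedom to recover the jump term, combined by linearity. The only cosmetic difference is that you make the first step explicit via Bogovskii plus the Fortin/commuting property of $\pi_h$, where the paper simply cites the construction of the space.
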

\begin{proof}
Under the assumption on the spaces (see \cite[Section 3.3]{BCH20} and Appendix) we can choose $\bw_1  \in \bV_h \cap [H^1_0(\Omega_\mathcal{T})]^d$ such that
\[
\nabla \cdot \bw_{1} =  \bar q^{\Omega_\mathcal{T}}-q \mbox{ in } \Omega_\mathcal{T} \mbox{ and } \|\nabla \bw_{1}\|_{\Omega_{\mathcal{T}_h}} \lesssim \|q - \bar q^{\Omega_\mathcal{T}}\|_{\Omega_\mathcal{T}},
\]
 to get
$$
-b_h(q,\bw_1) =   \|q - \bar q^{\Omega_\mathcal{T}}\|_{\Omega_{\mathcal{T}}}^2.
$$
We obtain the bound $\|h^{-\frac12} \bw_1\|_{\Gamma} \lesssim \|\nabla \bw_1\|_{\Omega_\Gamma}$ using the trace inequality \eqref{eq:trace} followed by the scaled Poincar\'e inequality (see \cite[Appendix]{BHL18}),
\[
h^{-\frac12} \|\bw_1\|_{\Omega_\Gamma} \lesssim \|\nabla \bw_1\|_{\Omega_\Gamma}.
\]

Using integration by parts we also have for all $q \in Q_h$,
\[
b_h(q,\bw) = \sum_{T} \int_{\partial T} q \bw \cdot \bn_{\partial T} ~\mbox{d}s.
\]

By the construction of $\bV_h$, we can set the vertex degrees of freedom to zero, and using only the vector degrees of freedom on the faces, we can construct $\bw_{2} \in \bV_h$ so that $\int_{\partial T} q \bw_{ 2} \cdot \bn_{\partial T}~\mbox{d}s = -\int_{\partial T} h q [q] ~\mbox{d}s$ and $\|\bw_{2} \cdot \bn\|_{\mathcal{F}(\mathcal{T}_h)} \lesssim \|h [q]\|_{\mathcal{F}(\mathcal{T}_h)}$. Summing over all the elements it follows that
\[
-b_h(q,\bw_{2}) =  \|h^{\frac12} [q]\|_{\mathcal{F}(\mathcal{T}_h)}^2.
\]
Note that by applying an inverse inequality, an inverse trace inequality and by the choice of face degrees of freedom we have
\[
\|\nabla \bw_{2}\|_{\Omega_\mathcal{T}} \lesssim \|h^{-1} \bw_{2}\|_{\Omega_\mathcal{T}}
 \lesssim \|h^{-\frac12} \bw_{2} \cdot \bn\|_{\mathcal{F}(\mathcal{T}_h)} \lesssim  \|h^{\frac12} [q]\|_{\mathcal{F}(\mathcal{T}_h)}.
 \]
We conclude by taking $\bw_{q} = \bw_1 + \bw_2$.
\end{proof}
}}
\begin{prop}
The spaces $\bV_h$ and $\bSigma_h$ introduced in Section \ref{sec:FEMconst} and the stabilization operator $j(\cdot,\cdot)$ defined by equation \eqref{eq:vec_stab} satisfy the Assumptions \ref{ass:mult_stab}-\ref{ass:trac}.
\end{prop}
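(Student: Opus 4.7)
The plan is to introduce a macro-scale grid on $\mathcal{T}_\Gamma$ and realize each of the three assumptions through a local construction per macro patch. I would begin by decomposing $\mathcal{T}_\Gamma$ into shape-regular macro elements $K$ of diameter $H = M h$ (for an integer $M$ to be fixed), each chosen so that $|\Gamma \cap K| \sim H^{d-1}$, and setting $\bSigma_H \subset \bSigma_h$ to be the space of vector-valued functions constant on each macro element, with $\pi_H : \bSigma_h \to \bSigma_H$ the associated $L^2(\Gamma)$ projection. Write $\bmu_K := \pi_H\bmu|_K$. For Assumption \ref{ass:stab_op}, the fluctuation $\bmu - \pi_H\bmu$ is piecewise constant on the fine mesh with zero $L^2(\Gamma\cap K)$ mean. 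A standard discrete Poincar\'e inequality for piecewise constants, in which deviations from the patch average are controlled by the sum of face jumps, gives $\|\bmu - \pi_H\bmu\|_{L^2(\Gamma\cap K)}^2 \lesssim (H/h) \sum_{F \subset K,\, F \in \mathcal{F}_i(\mathcal{T}_\Gamma)} h \|[\bmu]\|_F^2$. Multiplying by $h$ and summing over patches yields \eqref{eq:lamvar_stab} with $C_s \sim H/h$.

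The central construction is the test function $\bvarphi$ in Assumption \ref{ass:mult_stab}. On each macro patch $K$, I would define a continuous piecewise linear cutoff $\phi_K \in \bW_h$ via its vertex values in $\mathcal{T}_h$, equal to $h$ at vertices of fine elements adjacent to $\Gamma \cap K$ and decaying linearly to zero at vertices at distance $\sim H$ inside $K$. Setting $\bvarphi := \sum_K \phi_K \bmu_K$ puts $\bvarphi \in \bV_h$ (no bubble enrichment is required since $\bW_h \subset \bV_h$). Direct scaling gives $c(\pi_H\bmu,\bvarphi) \sim h |\Gamma \cap K| |\bmu_K|^2 \sim \|h^{1/2}\pi_H\bmu\|_\Gamma^2$. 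Since $|\nabla \phi_K| \sim h/H$ on a region of volume $\sim H^d$, one obtains $\|\nabla\bvarphi\|_{\Omega_\mathcal{T}}^2 \lesssim (h/H)\|h^{1/2}\bmu\|_\Gamma^2$, while $\phi_K \sim h$ on a trace of measure $\sim H^{d-1}$ gives $\|h^{-1/2}\bvarphi\|_\Gamma^2 \lesssim \|h^{1/2}\bmu\|_\Gamma^2$. Splitting $c(\bmu,\bvarphi) = c(\pi_H\bmu,\bvarphi) + c(\bmu - \pi_H\bmu,\bvarphi)$, Cauchy--Schwarz and Young's inequality on the fluctuation part (using the trace bound just obtained) then yield \eqref{eq:lam_infsup}, with $C_{H/h} \sim h/H$ and $C_\lambda = O(1)$.

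For Assumption \ref{ass:trac}, on each $T \in \mathcal{T}_\Gamma$ with $|\Gamma \cap T| > 0$, set $\ssrf|_T := -h_T^{-1} \bar\bv^{\Gamma \cap T}$. Orthogonality of the $L^2(\Gamma \cap T)$ projection onto constants gives $-c(\ssrf,\bv) = \|h^{-1/2}\bar\bv^{\Gamma\cap T}\|_\Gamma^2 = \|h^{-1/2}\bv\|_\Gamma^2 - \|h^{-1/2}(\bv - \bar\bv^{\Gamma\cap T})\|_\Gamma^2$. The oscillation term is bounded by an elementwise argument: comparing $\bar\bv^{\Gamma\cap T}$ against the elemental average $\bar\bv^T$, applying the trace inequality \eqref{eq:trace} and Poincar\'e on $T$ yields $\|h^{-1/2}(\bv - \bar\bv^{\Gamma\cap T})\|_{L^2(\Gamma\cap T)}^2 \lesssim \|\nabla\bv\|_T^2$. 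Summing gives \eqref{eq:vtrace_stab1}. The bound \eqref{eq:vtrace_stab2} on $\|h^{1/2}\ssrf\|_\Gamma^2$ follows from Jensen's inequality for $\bar\bv^{\Gamma\cap T}$, while the jump part $j(\ssrf,\ssrf)$ is handled by an inverse trace inequality on fine faces.

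The principal obstacle is obtaining the smallness of $C_{H/h}$ in \eqref{eq:var_stab}: the test function must be spread over a layer of width $\sim H$ rather than concentrated in a single fine-element strip near $\Gamma$, which motivates the macro-scale cutoff $\phi_K$. A secondary technical nuisance concerns elements with very small intersection $|\Gamma \cap T|$, where the scaling $h_T^{-1} \bar\bv^{\Gamma\cap T}$ could in principle degrade; I would handle these by setting $\ssrf|_T = 0$ on such small-cut elements and absorbing the boundary trace via the boundary-face contribution in $j$ (cf.\ the convention $[x]|_F = x|_F$ on mesh-boundary faces) together with the ghost-penalty augmentation discussed in Remark \ref{rem:norm}.
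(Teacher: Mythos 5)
Your treatment of Assumptions \ref{ass:mult_stab} and \ref{ass:stab_op} follows the paper's route: the same agglomeration of $\mathcal{T}_\Gamma$ into macro patches of diameter $H=Mh$, the same discrete Poincar\'e/jump argument for \eqref{eq:lamvar_stab}, and essentially the same cutoff-times-constant test function $\bvarphi$ with the scalings $C_{H/h}=O(M^{-1})$, $C_\lambda=O(1)$. (One detail you gloss over: your $\phi_K$ decays only in the bulk direction, so adjacent patch cutoffs overlap laterally in an $O(h)$-wide band on $\Gamma$; the paper avoids this by using radially decaying hats supported in disjoint patches and only prescribing the \emph{average} $\bar\bvarphi_i^{\Gamma_i}=2h\bar\bmu^{\Gamma_i}$, which suffices because $\pi_H\bmu$ is patchwise constant. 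Your version is repairable, but needs a word on why the cross terms are $O(M^{-1})$-small.)

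The genuine gap is in Assumption \ref{ass:trac}, where you abandon the patch machinery for an elementwise construction $\ssrf|_T=-h_T^{-1}\bar\bv^{\Gamma\cap T}$. Your oscillation bound goes through the trace inequality \eqref{eq:trace} and Poincar\'e on the \emph{full} element $T$, producing $C_u\|\nabla\bv\|_{\Omega_\Gamma}^2$ rather than $C_u\|\nabla\bv\|_{\Omega}^2$. In the Lagrange multiplier method there is no ghost penalty in the stability analysis: the norm $\|\cdot\|_{1,h}$ and the coercivity obtained in Step 1 of Theorem \ref{thm:infsup} control $\|\nabla\bv\|$ only on $\Omega$, so a remainder supported on $T\setminus\Omega$ cannot be absorbed. (Remark \ref{rem:norm} is explicit that ghost penalty plays no role in the LM stability proof, only in conditioning, so appealing to it here changes the method.) Your fallback of setting $\ssrf|_T=0$ on small-cut elements does not help either: it removes exactly the term $-c(\ssrf,\bv)$ that was supposed to control $\|h^{-1/2}\bv\|_{\Gamma\cap T}$ there, and the boundary-face convention in $j$ acts on the multiplier, not on $\bv$. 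Moreover \eqref{eq:vtrace_stab2} also fails elementwise: bounding $j(\ssrf,\ssrf)$ by $\|h^{1/2}\ssrf\|_\Gamma^2$ requires comparing $|T|\,h^{-2}|\bar\bv^{\Gamma\cap T}|^2$ with $h^{-1}|\Gamma\cap T|\,|\bar\bv^{\Gamma\cap T}|^2$, and the ratio $h^{d-1}/|\Gamma\cap T|$ is unbounded for small cuts. The paper's choice $\ssrf=-2h^{-1}\pi_H\bv$, with $\pi_H$ the \emph{patchwise} average, is precisely what makes the argument cut-independent: the trace and Poincar\'e inequalities are applied on the fat sets $P_i\cap\Omega$ of diameter $O(H)$, whose constants depend only on $M$ and the shape regularity, and $|\Gamma_i|\sim H^{d-1}$ is guaranteed by construction. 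You should carry the patch construction through to Assumption \ref{ass:trac} as well.
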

\begin{proof}
We construct $\bSigma_H$ by
agglomerating the elements of $\mathcal{T}_{\Gamma}$ in $N$ disjoint, boundary elements of radius $O(H) \sim (|\Gamma|/N)^{\frac{1}{d-1}}$ with $H \sim M h$ for some $M \ge 1$. Then we extend every surface patch into the bulk a distance $O(H)$ to create bulk patches $\{P_i\}_{i=1}^N$, see Figure \ref{fig:patch}.  Let $\Gamma_i:=\Gamma \cap P_i$, by construction $\Gamma = \cup_{i=1}^N \Gamma_i$ and define $\pi_H \bmu\vert_{\Omega_\Gamma \cap P_i}:=\bar \bmu^{\Gamma_i}$. Observe that by varying $h$ and $N$ we can use $M$ as a free parameter. We assume that $M$ is large enough so that the largest ball $B_i$ with center $\bx_i$ on $\Gamma_i$, radius $r_i = O(H)$ and $B_i\cap \Omega_\mathcal{T}  \subset P_i$, contains at least one vertex $\by_i$ of $\mathcal{T}_h$, such that $|\by_i - \bx_i| \leq r_i/2$ for all $i$.  With these preparations we define
\[
\bSigma_H := \{\bmu_H \in L^2(\Omega_{\Gamma}) : \bmu_H\vert_{\Omega_\Gamma \cap P_i} \in \mathbb{P}_0(\Omega_\Gamma \cap P_i), 1\leq i \leq N \}.
\]
\begin{figure}
\begin{center}
\setlength{\unitlength}{1.0cm}
\begin{picture}(10,5)
\includegraphics[scale=0.8]{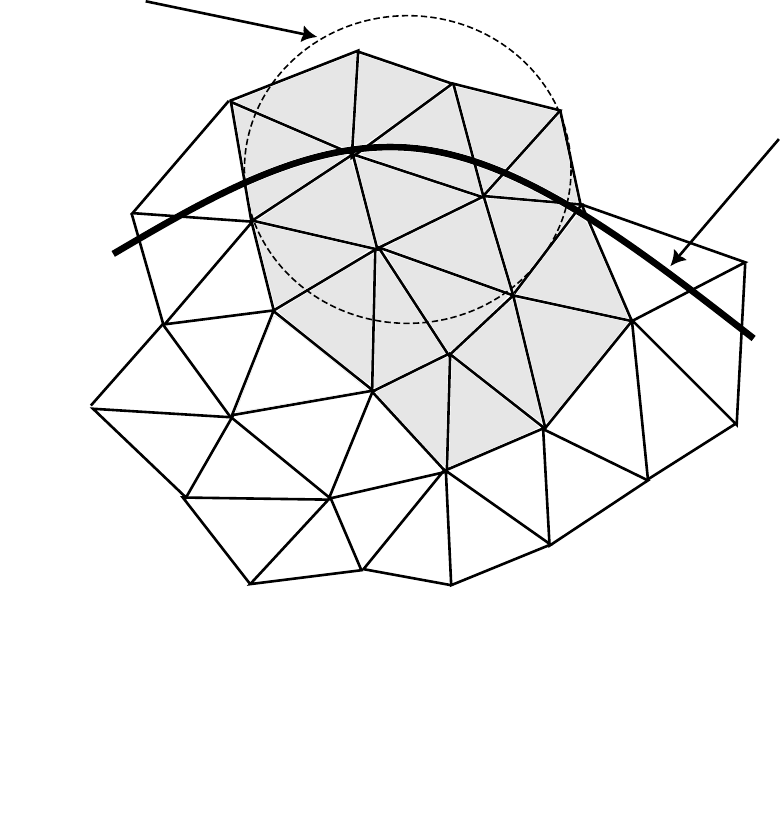}
\thicklines
\multiput(-1.2,2.)(0, 0.4){12}{\line(0,1){0.2}}
\multiput(-4.5,2.)(0, 0.4){12}{\line(0,1){0.2}}
\multiput(-5.2,6.3)(0.4,0){14}{\line(1,0){0.2}}
\multiput(-5.2,3)(0.4,0){14}{\line(1,0){0.2}}

\put(-5.7,6.7){$B_i$}
\put(-0.1,5.7){$\Gamma$}

\put(-0.2,4.55){{{$\left.\begin{array}{l}  \\ \\ \\ \\ \\ \\ \\ [4mm]\end{array}
      \right\}$}}}
\put(0.6,4.55){$2 H$}
\put(-2.9,4.6){$P_i$}
\put(-4.5,1.8){{\Large{$\underbrace{\mbox{  \hspace{2.9cm} }}_{2 H}$}}}
\end{picture}
\end{center}
\vspace{-1.5cm}
\caption{Illustration of a patch $P_i$ in two space dimensions. The patch consist of the shaded triangles.}\label{fig:patch}
\label{fig:lemSh}
\end{figure}

{Assumption  \ref{ass:stab_op}} now follows from the following discrete interpolation result, for all $\bmu \in \bSigma_h$ there holds
\begin{equation} \label{eq:disc_interp}
\sum_{i=1}^N h \|\bmu - \bar \bmu^{\Gamma_i}\|_{\Gamma_i}^2 \leq C_s j(\bmu,\bmu),
\end{equation}
where $C_s$ is a constant that depends only on $M$. For completeness we give a proof in the appendix. 

Next we consider Assumption \ref{ass:mult_stab}. To verify (\ref{eq:var_stab}) we prove in the appendix that by construction,  for 
$M$ large enough, there exist functions 
$\bvarphi_i \in \bV_h$ such that $\bar \bvarphi_i^{\Gamma_i} = 2 h \bar \bmu^{\Gamma_i}$ and 
\begin{equation}\label{eq:fcn_stab}
\|\nabla \bvarphi_i\|_{\Gamma_i} +H^{-\frac12} \|\nabla \bvarphi_i\|_{P_i}\leq C M^{-1} \|\bar \bmu^{\Gamma_i}\|_{\Gamma_i}.
\end{equation}
It follows that if $\bvarphi = \sum_{i=1}^N \bvarphi_i$ then
\[
\|\nabla \bvarphi_i\|_{\Omega_{\mathcal{T}}} \leq C H^{\frac12}/h^{\frac12} M^{-1} \|h^{\frac12}\pi_H \bmu\|_{\Gamma} \leq C M^{-\frac12} \|h^{\frac12} \bmu\|_{\Gamma}
\]
where we used the stability of the $L^2$-projection in the last step. This shows the left inequality of \eqref{eq:var_stab}, with $C_{H/h} = O(M^{-1})$.
Noting that by Poincar\'e's inequality $\|\bvarphi_i\|_{\Gamma_i} \leq C H \|\nabla_\Gamma \bvarphi_i\|_{\Gamma_i }$, where $\nabla_\Gamma$ denotes the tangential gradient on $\Gamma$,
\begin{equation}\label{eq:vargamma}
\|h^{-\frac12} \bvarphi\|_{\Gamma}  \leq C M \left(\sum_{i=1}^N h \|\nabla_\Gamma \bvarphi_i\|_{\Gamma_i }^2 \right)^{\frac12} \leq C  \|h^{\frac12} \pi_H \bmu\|_{\Gamma}.
\end{equation}
This proves the right inequality of \eqref{eq:var_stab} after application of the stability of the $L^2$-projection. To prove \eqref{eq:lam_infsup} note that by construction
\[
c(\bmu, \bvarphi) = c(\bmu - \bar \bmu, \bvarphi) + 2 \|h^{\frac12} \pi_H \bmu\|_{\Gamma}^2 \ge - \|h^{\frac12}(\bmu - \pi_H \bmu)\|_{\Gamma} \|h^{-\frac12} \bvarphi\|_{\Gamma} + 2 \|h^{\frac12} \pi_H \bmu\|_{\Gamma}^2.
\]
Estimating the right hand side from below using \eqref{eq:vargamma} followed by the arithmetic-geometric inequality we obtain
\[
c(\bmu, \bvarphi) \ge \|h^{\frac12} \pi_H \bmu\|_{\Gamma}^2 - C \|h^{\frac12}(\bmu - \pi_H \bmu)\|_{\Gamma}^2 .
\]
and using that $\|h^{\frac12} \bmu\|_{\Gamma}^2 = \|h^{\frac12} \pi_H \bmu\|_{\Gamma}^2+ \|h^{\frac12} (\bmu - \pi_H \bmu) \|_{\Gamma}^2$ concludes the verification of \eqref{eq:lam_infsup}.

Finally,  to verify Assumption \ref{ass:trac} we need a test function that recovers control of $\bv\vert_{\Gamma}$. To this end let $\ssrf= -2 h^{-1} \pi_H \bv$ and observe that by construction we have
\[
-c(\ssrf, \bv) =  2 \|h^{-\frac12} \pi_H \bv\|^2 \ge  \|h^{-\frac12}  \bv\|_\Gamma^2 - C_M \|\nabla \bv\|_{\Omega}^2.
\]
In the last inequality we used the bounds
\[
\|h^{-\frac12} \bv\|^2_{\Gamma} \leq \|h^{-\frac12} (\bv - \pi_H \bv)\|^2_{\Gamma} + \|h^{-\frac12}  \pi_H \bv\|^2_{\Gamma} \leq C_M \|\nabla \bv\|^2_{\Omega} + \|h^{-\frac12}  \pi_H \bv\|^2_{\Gamma},
\]
where we used trace inequality on each patch $P_i\cap \Omega$, $\|\bv\|_{\Gamma_i} \lesssim H^{-\frac12} \|\bv\|_{P_i\cap \Omega} + H^{\frac12}\|\nabla \bv \|_{P_i\cap \Omega}$  followed by a Poincar\'e inequality $\|\bv - \bar \bv\|_{P_i\cap \Omega} \lesssim H \|\nabla \bv \|_{P_i\cap \Omega}$ to get
\[
\|h^{-\frac12} (\bv - \bar \bv)\|_{\Gamma_i} \lesssim M^{-\frac12} h^{-1} \|\bv - \bar \bv\|_{P_i\cap \Omega}  + M^{\frac12} \|\nabla \bv \|_{P_i\cap \Omega} \leq C M^{\frac12} \|\nabla \bv \|_{P_i \cap \Omega}.
\]
It follows that $C_u = O(M^{\frac12})$.
To verify \eqref{eq:vtrace_stab2} we see that by the definition of $\ssrf$ and the stability of the $L^2$-projection, $\|\pi_H \bv\|_{\Gamma_i} \leq \|\bv\|_{\Gamma_i}$,
\[
\|h^{\frac12} \ssrf\|_{\Gamma}^2 = \sum_{i=1}^N \| h^{\frac12} 2 h^{-1} \pi_H \bv\|^2_{\Gamma_i} \lesssim  \| h^{\frac12} h^{-1} \bv\|^2_{\Gamma}.
\]
For the second term using trace and inverse trace inequalities and the stability of the $L^2$-projection we obtain
\[
j(\ssrf,\ssrf) \lesssim \|\ssrf\|_{\Omega_\Gamma}^2 \leq \|h^{\frac12} \ssrf\|_{\Gamma}^2
\]
we conclude as before.
\end{proof}
\subsection{The global infsup condition}
Here we prove a global infsup condition under the Assumptions \ref{ass:mult_stab}-\ref{ass:trac} and that the velocity-pressure spaces are inf-sup stable with $\nabla \cdot \bV_h \subset Q_h$. For the pressure stability we use the particular properties of  $\bV_h \times Q_h$, but the analysis is straightforward to extend to other inf-sup stable velocity-pressure pairs with similar properties.
\begin{thm}\label{thm:infsup}
Let the spaces $\bV_h,\,\bSigma_h$ satisfy Assumptions \ref{ass:mult_stab}-\ref{ass:stab_op}.
There exists $h_0>0$, $\alpha>0$ such that for all $\bv,q,\bmu \in \bV_h\times Q_h\times \bSigma_h$, $h < h_0$ there exists $\bw,y,\srf \in \bV_h\times Q_h\times \bSigma_h$ such that
\begin{equation}\label{eq:Astab_1}
\alpha \tnorm{\bv,q,\bmu}^2 \leq A_h[(\bv,q,\bmu),(\bw,y,\srf )] 
\end{equation}
and
\begin{equation}\label{eq:tnorm_stab}
\tnorm{\bw,y,\srf} \lesssim \tnorm{\bv,q,\bmu}.
\end{equation}
\end{thm}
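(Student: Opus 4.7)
The plan is to follow the standard saddle-point inf-sup argument: construct the test triple $(\bw,y,\srf)$ as a perturbation of $(\bv,q,\bmu)$ in which each perturbation is tailored to recover one of the missing pieces of the triple norm, using in turn the four Assumptions. Concretely, I will take
\[
\bw := \bv + \eta_1 \bw_q + \eta_2 \bvarphi, \qquad y := q + \eta_3 \nabla\cdot \bv, \qquad \srf := \bmu + \eta_4 \ssrf,
\]
where $\bw_q$ is the pressure test field from Assumption \ref{ass:velpress}, $\bvarphi$ the velocity field from Assumption \ref{ass:mult_stab}, and $\ssrf$ the multiplier from Assumption \ref{ass:trac}. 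The positive weights $\eta_1,\dots,\eta_4$ will be fixed at the end, after the parameters $\gamma$ and $H/h$ have been selected.

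First I would test with the diagonal $(\bv,q,\bmu)$: all $b_h$ and $c$ cross terms cancel by symmetry and one is left with $\|\nabla \bv\|_\Omega^2 + \gamma\, j(\bmu,\bmu)$. Adding $\eta_3(0,\nabla\cdot\bv,0)$ exploits $\nabla\cdot \bV_h \subset Q_h$ to produce $\eta_3\,\|\nabla\cdot \bv\|_{\Omega_{\mathcal T}}^2$. The perturbation $\eta_1\bw_q$ contributes $-\eta_1 b_h(q,\bw_q) \gtrsim \eta_1\|q\|_{0,\mathcal{T}_h}^2$ by Assumption \ref{ass:velpress}, plus cross terms $\eta_1 a(\bv,\bw_q) + \eta_1 c(\bmu,\bw_q)$ that are dominated by $\|\nabla\bv\|_\Omega \|q\|_{0,\mathcal{T}_h} + \|h^{1/2}\bmu\|_\Gamma \|q\|_{0,\mathcal{T}_h}$ via the trace inequality \eqref{eq:trace} and the stability bound on $\bw_q$. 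The perturbation $\eta_2\bvarphi$ gives, through \eqref{eq:lam_infsup} combined with \eqref{eq:lamvar_stab},
\[
\eta_2\, c(\bmu,\bvarphi) \;\geq\; \eta_2\,\|h^{1/2}\bmu\|_\Gamma^2 - \eta_2 C_\lambda C_s\, j(\bmu,\bmu),
\]
while its cross terms with $\bv$ and $q$ are controlled by $\|\nabla\bvarphi\|_{\Omega_{\mathcal T}} \leq C_{H/h}^{1/2}\|h^{1/2}\bmu\|_\Gamma$, which is arbitrarily small for $H/h$ large. Finally $\eta_4\ssrf$ delivers $-\eta_4 c(\ssrf,\bv) \geq \eta_4\|h^{-1/2}\bv\|_\Gamma^2 - \eta_4 C_u\|\nabla\bv\|_\Omega^2$ by Assumption \ref{ass:trac}, and a stabilization cross term $\gamma\eta_4\, j(\bmu,\ssrf)$ handled by Cauchy-Schwarz and $j(\ssrf,\ssrf)\leq C_u \|h^{-1/2}\bv\|_\Gamma^2$.

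To conclude \eqref{eq:Astab_1}, I would fix $H/h$ large so that $C_{H/h}$ is smaller than any prescribed threshold, then choose $\eta_4$ small enough that $\eta_4 C_u$ is, say, $\leq 1/4$ (so the $\|\nabla\bv\|_\Omega^2$ loss is absorbed by the diagonal) and that $\gamma\eta_4^2 C_u$ stays below $\gamma/4$, then pick $\eta_2$ small enough that $\eta_2 C_\lambda C_s \leq \gamma/2$ and the $\bvarphi$-cross terms are absorbed using $C_{H/h}$, and finally $\eta_1,\eta_3$ small enough to absorb the remaining $\bw_q$-cross terms by weighted Cauchy-Schwarz. What remains on the right-hand side is a positive linear combination of $\|\nabla\bv\|_\Omega^2$, $\|\nabla\cdot\bv\|_{\Omega_{\mathcal T}}^2$, $\|h^{-1/2}\bv\|_\Gamma^2$, $\|q\|_{0,\mathcal T_h}^2$, $\|h^{1/2}\bmu\|_\Gamma^2$ and $j(\bmu,\bmu)$, which is equivalent to $\tnorm{\bv,q,\bmu}^2$. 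The companion bound \eqref{eq:tnorm_stab} follows directly from the triangle inequality and the stability statements in the three Assumptions, together with the observation that $\nabla\cdot\bv$ is macro-element-wise constant so $\|\nabla\cdot \bv\|_{0,\mathcal T_h} \lesssim \|\nabla\cdot \bv\|_{\Omega_{\mathcal T}}$ via an inverse trace inequality on the macro-element skeleton.

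The main obstacle is the bookkeeping in the parameter balance: the loss $\eta_4 C_u \|\nabla\bv\|_\Omega^2$ from the trace test and the $-\eta_2 C_\lambda C_s\, j(\bmu,\bmu)$ loss from the multiplier test both compete with the single positive contributions to $\|\nabla\bv\|_\Omega^2$ and $j(\bmu,\bmu)$ obtained from the diagonal test, and one must also ensure that $\bvarphi$ can be chosen so that its boundary contribution $\bar q^{\Omega_{\mathcal T}}\int_{\partial\Omega_{\mathcal T}}\bvarphi\cdot \bn$ in $b_h(q,\bvarphi)$ vanishes (which follows from the patches $P_i$ sitting strictly inside $\Omega_{\mathcal T}$ provided $h<\delta$). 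Once the hierarchy $H/h \gg 1$, then $\eta_4$, then $\eta_2$, then $\eta_1,\eta_3$ is chosen, every cross term can be absorbed and the inf-sup constant $\alpha$ depends only on $\gamma$, $H/h$ and the shape-regularity constants, not on the cut configuration.
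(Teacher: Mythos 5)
Your proposal is correct and follows essentially the same route as the paper's proof: the same composite test triple $\bw=\bv+\epsilon_1\bw_q+\epsilon_2\bvarphi$, $y=q+\nabla\cdot\bv$, $\srf=\bmu+\epsilon_3\ssrf$, the same use of each Assumption to recover each component of the triple norm, and the same parameter hierarchy (fix $H/h$ large first, then the weights). Your observation that the patches $P_i$ sit strictly inside $\Omega_{\mathcal T}$, so the constant part $\bar q^{\Omega_{\mathcal T}}$ drops out of $b_h(q,\bvarphi)$ and only $\|q-\bar q^{\Omega_{\mathcal T}}\|_{\Omega_{\mathcal T}}$ is needed, is a detail the paper passes over silently but is indeed required for the cross-term bound to close in the stated norm.
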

\begin{proof}
The proof proceeds by choosing different test functions to get control of the various terms in the norm $\tnorm{\cdot,\cdot,\cdot}$. 
\begin{enumerate}
\item \underline{Control of the $H^1$-norm of velocities, the divergence on $\Omega_{\mathcal{T}}$ and stabilization.}
First note that since $\nabla \cdot \bV_h \subseteq  Q_h$  we have
\[
A_h[(\bv,q,\bmu),(\bv,q +  \nabla \cdot \bv,\bmu )] = \|\nabla \bv\|_{\Omega}^2 + \gamma j(\bmu,\bmu) +  \|\nabla \cdot \bv\|_{\Omega_{\mathcal{T}}}^2.
\]
\textcolor{black}{
\item \underline{Control of the $L^2$-norm of the pressure on $\Omega_{\mathcal{T}}$.} Using Assumption \ref{ass:velpress} we can choose $\bw = 2 \bw_{q} \in \bV_h $
 to get
\[
A_h[(\bv,q,\bmu),(\bw,0,0)] \ge  \|q\|_{0,\Omega_{\mathcal{T}}}^2 - C  (\|\nabla \bv\|_{\Omega}^2 + \|h^{\frac12} \bmu\|_{\Gamma}^2).
\]
Here we used that 
\begin{align*}
|a(\bv,\bw)+c(\bmu,\bw)|  &\leq (\|\nabla \bv\|_{\Omega} + \|h^{\frac12} \bmu\|_{\Gamma})(\|\nabla \bw\|_{\Omega} + \|h^{-\frac12} \bw\|_{\Gamma})
\\ 
&  \lesssim (\|\nabla \bv\|_{\Omega} + \|h^{\frac12} \bmu\|_{\Gamma})\|q\|_{0,\Omega_{\mathcal{T}}} \\
& \leq \|q\|_{0,\Omega_{\mathcal{T}}}^2 + C (\|\nabla \bv\|_{\Omega}^2 + \|h^{\frac12} \bmu\|_{\Gamma}^2).
\end{align*}
}
\item \underline{Control of the Lagrange multiplier for the boundary condition.}
Turning now to the Lagrange multiplier we use Assumption \ref{ass:mult_stab} to choose $\bvarphi \in \bV_h$ such that
\[
\|h^{\frac12} \bmu \|^2_{\Gamma} + a(\bv,\bvarphi) - b_h(q,\bvarphi)  \leq A_h[(\bv,q,\bmu),(\bvarphi,0,0)] +C_\lambda \|h^{\frac12} (\bmu- \pi_{H} \bmu)\|^2_\Gamma
\]
Using the stability of $\bvarphi$, \eqref{eq:lamvar_stab} it is straightforward to show that,
\[
|a(\bv,\bvarphi) - b_h(q,\bvarphi)| \leq  (\|\nabla \bv\|_\Omega + \|q\|_{\Omega_{\mathcal{T}}})\|\nabla \bvarphi\|_{\Omega_{\mathcal{T}}} \leq (\|\nabla \bv\|_\Omega + \|q\|_{\Omega_{\mathcal{T}}}) C_{H/h}^{\frac12} \|h^{\frac12} \bmu\|_{\Gamma}
\]
and hence applying also Assumption \ref{ass:stab_op} we have
\[
\frac34  \|h^{\frac12}  \bmu\|_{\Gamma}^2 - C_{H/h} (\|\nabla \bv\|_\Omega^2 + \|q\|^2_{0,\mathcal{T}_h}) - C_\lambda C_s j(\bmu,\bmu) \leq A_h[(\bv,q,\bmu),(\bvarphi,0,0)].
\]
\item \underline{Control of $\bv$ on the boundary.} We use the Assumption \ref{ass:trac} to choose $\ssrf \in \bSigma_h$ such that
\[
\|h^{-\frac12} \bv \|^2_{\Gamma} + \gamma j(\bmu,\ssrf) - C_u \|\nabla \bv\|_\Omega^2  \leq A_h[(\bv,q,\bmu),(0,0,\ssrf)].
\]
By the Cauchy-Schwarz inequality and \eqref{eq:vtrace_stab2} it follows that
\[
\frac34 \|h^{-\frac12} \bv \|^2_{\Gamma} - C_u \gamma^2 j(\bmu,\bmu) - C_u \|\nabla \bv\|_\Omega^2 \leq A_h[(\bv,q,\bmu),(0,0,\ssrf)].
\]
\item \underline{Proof of the bound \eqref{eq:Astab_1}.} 
To get the first bound we take $\bw = \bv + \epsilon_1 \bw_q + \epsilon_2 \bvarphi$ and $y=q +  \nabla \cdot \bv$, $\srf = \bmu + \epsilon_3 \ssrf$  leading to
\begin{align*}
&A_h[(\bv,q,\bmu),(\bv + \epsilon_1 \bw_q + \epsilon_2 \bvarphi,q +  \nabla \cdot \bv,\bmu+\epsilon_3 \ssrf)] 
\\
&\qquad
\ge (1 - \epsilon_1 C_p - \epsilon_2 C_{H/h} -  C_u \epsilon_3)) \|\nabla \bv\|_{\Omega}^2
 + (\epsilon_1   -  \epsilon_2 C_{H/h}) \|q\|_{0,\mathcal{T}_h}^2\\
&\qquad \qquad +  \|\nabla \cdot \bv\|_{\Omega}^2   + (\gamma - \epsilon_2 C_\lambda C_s  - \epsilon_3 C_u \gamma^2) j(\bmu,\bmu) 
 \\
 &\qquad \qquad   + 
 (3 \epsilon_2 /4  - \epsilon_1 C_p ) \|h^{\frac12} \bmu\|_{\Gamma}^2 +  3 \epsilon_3/4 \|h^{-\frac12} \bv\|^2_\Gamma.
\end{align*}
First fix $\epsilon_1 = \epsilon_2/(4 C_p)$ so that 
\[
 (3 \epsilon_2 /4  - \epsilon_1 C_p ) = \epsilon_2/2 \mbox{ and } 1 - \epsilon_1 C_p - \epsilon_2 C_{H/h} -  C_u \epsilon_3 = 1- \epsilon_2(1/4+C_{H/h}) - C_u \epsilon_3.
\]
Now fix $H/h$ sufficiently large so that $C_{H/h}< \min((8 C_p)^{-1}, 1/4)$ then
\[
(\epsilon_1 - \epsilon_2 C_{H/h}) \ge \epsilon_2 (8 C_p)^{-1}
\]
and
\[
1- \epsilon_2(1/4+C_{H/h}) - C_u \epsilon_3 \ge 1- \epsilon_2/2 - C_u \epsilon_3.
\]
Since we have fixed $H/h$ to make $C_{H/h}$ small enough, $C_s$ and $C_u$ are also fixed and we can conclude by choosing $\epsilon_2 = \min(\gamma/(4 C_\lambda C_s), 1/2)$ and $\epsilon_3 = 1/(4 C_u) \min(1,1/\gamma)$ to obtain
\[
1- \epsilon_2/2 - C_u \epsilon_3 \ge 1/2
\] 
and
\[
(\gamma - \epsilon_2 C_\lambda C_s  - \epsilon_3 C_u \gamma^2)\ge \gamma/2.
\]
The first equality then holds with $\alpha = \tfrac14 \min_i \alpha_i$
where
\[
\alpha_1 = 1, \, \alpha_2 = 2/\gamma,\, \alpha_3 = \gamma/(2 C_\lambda C_s),\, \alpha_3 = 3/(4 C_u)\min(1,1/\gamma)), \, \alpha_4 = \tfrac{1}{2 C_p} \min(1, (C_\lambda C_s)^{-1}).
\]
\item \underline{Proof of the stability \eqref{eq:tnorm_stab}.} We want to establish that
\[
\tnorm{\bv + \epsilon_1 \bw_q + \epsilon_2 \bvarphi,q+ \nabla \cdot \bv,\bmu+\epsilon_3 \ssrf} \lesssim \tnorm{\bv,q,\bmu}.
\]
Using the triangle inequality we have
\[
\begin{aligned}
\tnorm{\bv + \epsilon_1 \bw_q + \epsilon_2 \bvarphi,q + \nabla \cdot \bv,\bmu+\epsilon_3 \ssrf} & \lesssim \tnorm{\bv,q,\bmu} + \epsilon_1 \| \bw_q\|_{1,h} +\epsilon_2 \|\bvarphi\|_{1,h}\\
& \qquad + \|\nabla \cdot \bv\|_{\mathcal{T}_h}+ \epsilon_3 \|h^{\frac12} \ssrf\|_{\Gamma} + \epsilon_3 j(\ssrf,\ssrf)^{\frac12}.
\end{aligned}
\]
We consider the terms in the right hand side one by one
\[
\| \bw_q\|_{1,h} \lesssim \|\nabla \bw_q\|_{\Omega_{\mathcal{T}}} + \|h^{-\frac12} \bw_q\|_{\Gamma}
\lesssim \|\nabla \bw_q\|_{\Omega_{\mathcal{T}}}  + \|h^{-\frac12} \bw_{[q]}\|_{\Gamma}.
\]
Now observe that by definition $\|\nabla \bw_q\|_{\Omega_{\mathcal{T}}}  \lesssim \|q -\bar q^{\Omega_\mathcal{T}} \|_{\Omega_{\mathcal{T}}} \lesssim  \|q\|_{0,\mathcal{T}_h}$ and for the boundary term we have using trace inequalities and the definition of $\bw_{[q]}$,
\[
\|h^{-\frac12} \bw_{[q]}\|_{\Gamma} \lesssim \|\nabla \bw_{[q]}\|_{\Omega_{\mathcal{T}}} + \|h^{-1} \bw_{[q]}\|_{\mathcal{T}_\Gamma} \lesssim \|h^{\frac12} [q]\|_{\mathcal{F}(\mathcal{T}_\Gamma)} \lesssim \|q\|_{0,\mathcal{T}_h}.
\]
The divergence is controlled using the stability of the average and trace inequalities,
\[
\|\nabla \cdot \bv\|_{0,\mathcal{T}_h}\lesssim \|\nabla \cdot \bv\|_{\Omega_\mathcal{T}},
\]
%
Then observe that by \eqref{eq:var_stab} as seen above,
\[
\|\bvarphi\|_{1,h} \lesssim \|h^{\frac12} \bmu\|_{\Gamma}
\]
and by \eqref{eq:vtrace_stab2} 
\[
\|h^{\frac12} \ssrf\|_{\Gamma} + j(\ssrf,\ssrf)^{\frac12} \lesssim \|h^{-\frac12} \bv\|_{\Gamma}.
\]
The claim follows by collecting the bounds.
\end{enumerate}
\end{proof}

\begin{rem}
Observe that there are no constraints on $\gamma$, but the constant $\alpha$ degenerates if $\gamma$ becomes too small or too large. The key constraint is that the mesh must be sufficiently fine so that it is possible to satisfy both the divergence free condition and the boundary condition at the same time. \textcolor{black}{
Essentially, making $H/h$ larger reduces the strength of the imposition of the  boundary condition on the divergence free space.
}
Since the method is unfitted there is no reason both the divergence free condition and the Dirichlet condition can be imposed exactly without locking. It follows that although the approximation satisfies the divergence free condition pointwise, the associated pointwise zero flow condition on the boundary is only be satisfied asymptotically.

\end{rem}

\begin{rem}
A similar result can be proven for the Nitsche method \eqref{eq:Nit}, there are two main differences. First we note that for $\gamma_0$ large enough we have using standard arguments
\[
 \|\nabla \bv\|_{\Omega_{\mathcal{T}}}^2+ \|h^{-\frac12} \bv\|_{\Gamma}^2 + \gamma_1 j(\mu,\mu) +  \|\nabla \cdot \bv\|_{\Omega_{\mathcal{T}}}^2\lesssim A_{Nit}[(\bv,q,\mu),(\bv,q +  \nabla \cdot \bv,\mu )].
\]
This time we see that coercivity holds over all of $\Omega_{\mathcal{T}}$ thanks to the ghost penalty term. The second difference is that since the Lagrange multiplier is a scalar, it must be paired with a scalar function for stability, the form however is $(\mu_h \bn_\Gamma, \bv_h)_\Gamma$. To see how this can be handled it is enough to consider one $\Gamma_i$. We add and subtract $\bar \bn_{\Gamma}^{\Gamma_i}$ to obtain
\[
(\mu \bn_\Gamma, \bv)_{\Gamma_i} = (\mu (\bn_\Gamma - \bar \bn_{\Gamma}^{\Gamma_i}),\bv)_{\Gamma_i} + (\mu  \bar \bn_{\Gamma}^{\Gamma_i} , \bv_h)_{\Gamma_i}.
\]
We may now choose $\bvarphi$ such that for  $\overline{\bvarphi}^{\Gamma_i}_i = h \bar \mu^{\Gamma_i} \bar \bn_{\Gamma}^{\Gamma_i}$ and recall that $\|\bn_{\Gamma} - \bar \bn_{\Gamma}^{\Gamma_i}\|_{L^\infty(\Gamma_i)} \leq C_\Gamma h$ to obtain
\[
(\mu \bn_\Gamma, \bvarphi_i)_{\Gamma_i} \ge (1 - C_\Gamma h) \|h^{\frac12} \bar \mu^{\Gamma_i}\|_{\Gamma_i}^2 - C_\Gamma h j(\mu,\mu).
\]
Using this test function yields control of the boundary pressure variable $\mu$ as before.
The pressure analysis carries over verbatim and of course no action needs to be taken for the control of $\bv$ on $\Gamma$, which is controlled through the Nitsche penalty term.
\end{rem}
\subsection{Error analysis}
We now prove an error bound.
Let $\bu^e$ denote a stable divergence free extension of $\bu$ (see \cite{LNO21}). Let $\pi_h \bu^e \in \bV_h$ be the interpolant introduced above which preserves the divergence free property.
Let $\pi_\Gamma \blambda^e$ be the $L^2$-projection of $\blambda^e$ on $\bSigma_h$. Finally we define $\pi_{\mathcal{T}}$ to be the local $L^2$-projection on piecewise constants in every element in $\mathcal{T}_h$, defined by
\begin{equation}\label{eq:piT}
(\pi_{\mathcal{T}} p,q)_{\Omega_\mathcal{T}} = (p,q)_{\Omega}, \quad \forall q \in Q_h.
\end{equation}
Note that this projection does not produce an accurate approximation of $p$ in the cells in $\mathcal{T}_{\Gamma}$. This is of no importance below, because the role of the projection is to make the pressure disappear, not to approximate it. An optimal global pressure is then obtained using post processing. Note however that by definition $\overline{\pi_{\mathcal{T}} p}^{\Omega_\mathcal{T}} = \bar p^{\Omega} = 0$.
\begin{thm}\label{thm:disc_error}(Error estimate for the discrete error)
Under our assumptions on $\Omega$ the exact solution satisfies $\bu \in [H^2(\Omega)]^d$ and $\blambda \in [H^{\frac12}(\Gamma)]^d$. There holds
\[
\tnorm{\pi_h \bu - \bu_h, \pi_{\mathcal{T}} p - p_h, \pi_\Gamma \blambda - \blambda_h} \lesssim h (\|\bu\|_{[H^2(\Omega)]^d} + \|\blambda\|_{[H^{\frac12}(\Gamma)]^d}).
\]
\end{thm}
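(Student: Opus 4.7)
The plan is to combine the inf-sup stability from Theorem \ref{thm:infsup} with a consistency argument in which most of the boundary/perturbation terms will cancel thanks to the design of the method. Write the discrete error as $\be_u := \pi_h \bu^e - \bu_h$, $e_p := \pi_{\mathcal{T}} p - p_h$, $\be_\lambda := \pi_\Gamma \blambda^e - \blambda_h$, where $\bu^e$ is the divergence-free extension \eqref{eq:stab_divex} and $\blambda^e$ is the extension used in \eqref{eq:mulitplier_approx}. By Theorem \ref{thm:infsup} applied to $(\be_u,e_p,\be_\lambda)$, there exist test functions $(\bw,y,\srf)\in \bV_h\times Q_h\times \bSigma_h$ with $\tnorm{\bw,y,\srf}\lesssim \tnorm{\be_u,e_p,\be_\lambda}$ such that
\[
\alpha\,\tnorm{\be_u,e_p,\be_\lambda}^2 \leq A_h\big[(\be_u,e_p,\be_\lambda),(\bw,y,\srf)\big].
\]
Using the discrete equation \eqref{eq:FEM_brinkman} to eliminate $A_h[(\bu_h,p_h,\blambda_h),\cdot]=l(\bw)$ and subtracting the continuous equation $a(\bu,\bw) - b(p,\bw) + c(\blambda,\bw) = l(\bw)$ (valid since $\bu\in\bV_0\cap H^{\rm div}_0$), the consistency residual becomes
\[
\begin{aligned}
A_h\big[(\be_u,e_p,\be_\lambda),(\bw,y,\srf)\big] &= a(\pi_h \bu^e - \bu,\bw) + \big[b(p,\bw) - b_h(\pi_{\mathcal{T}} p,\bw)\big] + b_h(y,\pi_h \bu^e)\\
 &\quad + c(\pi_\Gamma \blambda^e - \blambda,\bw) - c(\srf,\pi_h \bu^e) + \gamma\, j(\pi_\Gamma \blambda^e,\srf).
\end{aligned}
\]

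The heart of the argument is that three of these six terms vanish. Since $\nabla\cdot\bw\in Q_h$ and $\pi_{\mathcal{T}}$ is defined by \eqref{eq:piT} as the $L^2$-projection from $\Omega$ to $Q_h$ on $\Omega_{\mathcal{T}}$, we get $(\pi_{\mathcal{T}} p,\nabla\cdot\bw)_{\Omega_{\mathcal{T}}} = (p,\nabla\cdot\bw)_\Omega$, so the pressure consistency term $b(p,\bw) - b_h(\pi_{\mathcal{T}} p,\bw)$ is exactly zero. Because $\pi_h$ commutes with the divergence and $\nabla\cdot\bu^e=0$ on $\Omega_{\mathcal{T}}$, we have $\nabla\cdot(\pi_h \bu^e)=0$ pointwise, so $b_h(y,\pi_h\bu^e)=0$. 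Finally, since $\bu=0$ on $\Gamma$, $c(\srf,\pi_h\bu^e) = c(\srf,\pi_h\bu^e-\bu)$, which is now of approximation size rather than order one.

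The remaining four terms are bounded by Cauchy--Schwarz, the definition of $\tnorm{\bw,y,\srf}$, and the approximation estimates \eqref{eq:approx} and \eqref{eq:mulitplier_approx}: $a(\pi_h\bu^e-\bu,\bw)\lesssim \|\nabla(\pi_h\bu^e-\bu)\|_\Omega\|\nabla\bw\|_\Omega \lesssim h\|\bu\|_{[H^2(\Omega)]^d}\,\tnorm{\bw,y,\srf}$; $c(\pi_\Gamma\blambda^e-\blambda,\bw)\lesssim \|h^{1/2}(\pi_\Gamma\blambda^e-\blambda^e)\|_\Gamma\|h^{-1/2}\bw\|_\Gamma \lesssim h\|\blambda\|_{[H^{1/2}(\Gamma)]^d}\tnorm{\bw,y,\srf}$; $c(\srf,\pi_h\bu^e-\bu)\lesssim \|h^{1/2}\srf\|_\Gamma \|h^{-1/2}(\pi_h\bu^e-\bu)\|_\Gamma\lesssim h\|\bu\|_{[H^2(\Omega)]^d}\tnorm{\bw,y,\srf}$. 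For the stabilization term, since $\blambda^e$ is single-valued across faces, $[\pi_\Gamma\blambda^e]=[\pi_\Gamma\blambda^e-\blambda^e]$ and \eqref{eq:mulitplier_approx} yields $j(\pi_\Gamma\blambda^e,\pi_\Gamma\blambda^e)^{1/2}\lesssim h\|\blambda\|_{[H^{1/2}(\Gamma)]^d}$, hence $\gamma j(\pi_\Gamma\blambda^e,\srf)\lesssim h\|\blambda\|_{[H^{1/2}(\Gamma)]^d}\tnorm{\bw,y,\srf}$.

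Collecting the bounds gives $A_h[(\be_u,e_p,\be_\lambda),(\bw,y,\srf)] \lesssim h(\|\bu\|_{[H^2(\Omega)]^d}+\|\blambda\|_{[H^{1/2}(\Gamma)]^d})\tnorm{\bw,y,\srf}$, and dividing through by $\tnorm{\be_u,e_p,\be_\lambda}$ using $\tnorm{\bw,y,\srf}\lesssim \tnorm{\be_u,e_p,\be_\lambda}$ produces the claim. The main conceptual obstacle is the first observation above: verifying that the pressure-velocity mismatch between the continuous form $b$ (on $\Omega$) and the discrete form $b_h$ (on $\Omega_{\mathcal{T}}$) produces \emph{no} consistency error at all, which is exactly what the construction of $\pi_{\mathcal{T}}$ through \eqref{eq:piT} together with the inclusion $\nabla\cdot\bV_h\subset Q_h$ is engineered to deliver; without this cancellation the error estimate would inherit pressure regularity from the cut zone.
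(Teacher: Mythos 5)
Your proposal is correct and follows essentially the same route as the paper: inf-sup stability from Theorem \ref{thm:infsup}, Galerkin orthogonality against the continuous problem, the exact cancellation $b_h(\pi_{\mathcal{T}}p,\bw)=b(p,\bw)$ from \eqref{eq:piT} together with $\nabla\cdot\bV_h\subset Q_h$, and term-by-term bounds via \eqref{eq:approx} and \eqref{eq:mulitplier_approx}. The only (harmless) quibble is the phrase ``three of these six terms vanish'' --- only two vanish identically, while $c(\srf,\pi_h\bu^e)$ is merely reduced to an approximation-sized term, exactly as you then treat it.
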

\begin{proof}
Let $\be_h = \pi_h \bu^e - \bu_h$, $\bfeta_h = \pi_\Gamma \blambda^e - \blambda_h$ and $\omega_h = \pi_{\mathcal{T}} p - p_h$. Using the inf-sup stability of Theorem \ref{thm:infsup} we have
\[
\begin{aligned}
\alpha \tnorm{\be_h,\omega_h,\bfeta_h}^2 & \leq A_h[(\be_h,\omega_h,\bfeta_h),(\bw,y,\srf )] 
\\
&=  A_h[(\pi_h \bu^e,\pi_{\mathcal{T}} p,\pi_\Gamma \blambda),(\bw,y,\srf )] - 
l(\bw) 
\\
&=  A_h[(\pi_h \bu^e,\pi_{\mathcal{T}} p,\pi_\Gamma \blambda),(\bw,y,\srf )] - A[(\bu,p,\blambda),(\bw,y,\srf )]
\\
 & = a(\pi_h \bu^e - \bu, \bw) - b_h(\pi_{\mathcal{T}_h} p,\bw) + b(p, \bw) + c(\bw,\pi_\Gamma \blambda^e - \blambda)
 \\
 &\qquad  -c(\pi_h \bu^e - \bu,\srf) + \gamma j(\pi_\Gamma \blambda^e ,\srf).
\end{aligned}
\]
By the definition  \eqref{eq:piT}  of $\pi_{\mathcal{T}}$ we see that, since $\nabla \cdot \bw \in Q_h$, 
\[
b_h(\pi_{\mathcal{T}} p,\bw) - b(p, \bw) = (\pi_{\mathcal{T}} p, \nabla\cdot  \bw)_{\Omega_{\mathcal{T}}} - (p, \nabla \cdot \bw)_{\Omega} = 0.
\]
We now bound the remaining terms of the right hand side using the Cauchy-Schwarz inequality and the approximation bounds \eqref{eq:approx}, \eqref{eq:mulitplier_approx}. First consider the weak Laplacian, 
\[
a(\pi_h \bu^e - \bu, \bw) \lesssim \|\nabla (\pi_h \bu^e - \bu)\|_{\Omega} \tnorm{\bw,0,0} \lesssim  h \|\bu\|_{[H^2(\Omega)]^d} \tnorm{\bw,0,0}.
\]
For the multiplier terms we have
\[
c(\bw, \pi_\Gamma \blambda^e - \blambda) \lesssim \|h^{\frac12}(\pi_\Gamma \blambda^e - \blambda)\|_{\Gamma} \|h^{-\frac12} \bw\|_{\Gamma} \lesssim  h \|\blambda\|_{[H^{\frac12}(\Gamma)]^d}\tnorm{\bw,0,0},
\]
\[
c(\pi_h \bu^e - \bu,\srf) \lesssim \|h^{-\frac12} (\pi_h \bu^e - \bu)\|_{\Gamma} \|h^{\frac12} \srf\|_\Gamma \lesssim h \|\bu\|_{[H^2(\Omega)]^d} \tnorm{0,0,\srf},
\]
and since $\blambda^e\vert_{\Omega_\Gamma} \in [H^1(\Omega_\Gamma)]^d$,
\[
j(\pi_\Gamma \blambda^e,\srf)=j(\pi_\Gamma \blambda^e - \blambda^e,\srf) \lesssim j(\pi_\Gamma \blambda^e - \blambda^e,\pi_\Gamma \blambda^e - \blambda^e)^{\frac12} j(\srf,\srf)^{\frac12} \lesssim  h \|\blambda\|_{[H^{\frac12}(\Gamma)]^d}\tnorm{0,0,\srf}.
\]
Collecting the above bounds we obtain
\[
\alpha \tnorm{\be_h,\omega_h,\bfeta_h}^2 \lesssim h(\|\bu\|_{[H^2(\Omega)]^d}+\|\blambda\|_{[H^{\frac12}(\Gamma)]^d})\tnorm{\bw,0,\srf}.
\]
The claim follows using the second inequality of Theorem \ref{thm:infsup}.
\end{proof}
Observe that this does only lead to an optimal error estimate uniformly in the domain for the continuous variables $\bu$ and $\blambda$. The pressure approximation deteriorates on the boundary due to the nonconsistency of $\pi_{\mathcal{T}} p$.  However,  using local postprocessing we obtain an approximation of the pressure that has optimal order of approximation. To this end we define the canonical extension of a polynomial $x$ defined on a simplex $T$ to $\mathbb{R}^d$ by $E_T x$. For all triangles $T \in \mathcal{T}_\Gamma$ we introduce a mapping $S_h$ that to each element in $\mathcal{T}_\Gamma$ associates an element in $\mathcal{T}_I$ a distance $O(h)$ away. For a detailed discussion of the mapping $S_h$ we refer to \cite{BHL22}. For every function $q_h \in Q_h$ we then define
\begin{equation}\label{eq:press_ext}
q_h^e := \left\{
\begin{array}{l}
q_h - \bar q_h^{\Omega_{\mathcal{T}}} \mbox{ for } x \in \Omega_I \\
E_{S_h(T)} q_h - \bar q_h^{\Omega_{\mathcal{T}}} \mbox{ for } x \in T \subset \Omega_\Gamma
\end{array}\right..
\end{equation}
Now for each pair $(T,S_h(T))$ let $B_{T,S_h} \subset \IR^d$ denote a ball with radius $O(h)$ such that $T\cup S_h(T) \subset B_{T,S_h}$. We let $\pi_B$ denote the $L^2$-projection on constant functions on $B_{T,S_h}$ and recall that for all $q \in H^1(B_{T,S_h})$ there holds $\|q - \pi_B q\|_{B_{T,S_h}} \lesssim h |q|_{H^1(B_{T,S_h})}$. Also observe that under the regularity assumptions of the mesh the overlap of the balls is finite. We also introduce the extension of the continuous pressure $p^e \in H^1(S)$ such that $p^e\vert_\Omega = p$ and $\|p^e\|_{H^1(S)} \lesssim \|p\|_{H^1(\Omega)}$, \cite{stein70}.
\begin{cor}\label{cor:cont_error}
There holds
\[
\|\bu - \bu_h\|_{[H^1(\Omega)]^d} + \|h^{\frac12}(\blambda - \blambda_h)\|_{\Gamma} \lesssim h (\|\bu\|_{[H^2(\Omega)]^d} + \|\blambda\|_{[H^{\frac12}(\Gamma)]^d})
\]
\[
\|p - p_h^e\|_{\Omega} \lesssim h (\|\bu\|_{[H^2(\Omega)]^d} + \|\blambda\|_{[H^{\frac12}(\Gamma)]^d}+ \|p\|_{H^1(\Omega)})
\]
\end{cor}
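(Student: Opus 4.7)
The velocity and multiplier bounds follow from Theorem~\ref{thm:disc_error} by triangle inequality: writing $\bu - \bu_h = (\bu - \pi_h \bu^e) + (\pi_h \bu^e - \bu_h)$, the first term is controlled by \eqref{eq:approx} and the gradient of the second by $\tnorm{\pi_h \bu^e - \bu_h, \cdot, \cdot} \lesssim h(\cdots)$. The seminorm is upgraded to $H^1(\Omega)$ through a Poincar\'e inequality using $\bu|_\Gamma = 0$ together with $\|\bv\|_\Gamma \leq h^{\frac12}\tnorm{\bv,\cdot,\cdot}$. The multiplier bound is analogous via $\blambda - \blambda_h = (\blambda - \pi_\Gamma \blambda^e) + (\pi_\Gamma \blambda^e - \blambda_h)$, \eqref{eq:mulitplier_approx} and Theorem~\ref{thm:disc_error}.

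For the pressure, the pivotal observation is $\overline{\pi_{\mathcal{T}} p}^{\Omega_\mathcal{T}} = \bar p^\Omega = 0$ (test \eqref{eq:piT} with $q \equiv 1$). Hence for $\omega_h := \pi_{\mathcal{T}} p - p_h$ the mean-free part is $\omega_h - \bar\omega_h^{\Omega_\mathcal{T}} = \pi_{\mathcal{T}} p - (p_h - \bar p_h^{\Omega_\mathcal{T}})$, which is exactly the quantity the post-processing produces on $\Omega_I$; its $L^2(\Omega_\mathcal{T})$-norm is controlled by $\|\omega_h\|_{0,\mathcal{T}_h} \lesssim h(\cdots)$ via Theorem~\ref{thm:disc_error}. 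Since elements in $\mathcal{T}_I$ lie inside $\Omega$ so that $\pi_{\mathcal{T}} p$ restricts to the standard element-wise $L^2$-projection of $p$, the triangle inequality gives $\|p - p_h^e\|_{\Omega_I} \leq \|p - \pi_{\mathcal{T}} p\|_{\Omega_I} + \|\pi_{\mathcal{T}} p - (p_h - \bar p_h^{\Omega_\mathcal{T}})\|_{\Omega_\mathcal{T}} \lesssim h(\|p\|_{H^1(\Omega)} + \cdots)$.

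On each cut element $T \in \mathcal{T}_\Gamma$, set $T' = S_h(T)$ and let $\pi_B p^e$ be the constant $L^2$-projection over $B_{T,S_h} \supset T \cup T'$. Using that $E_{S_h(T)} p_h = p_h|_{T'}$ is constant together with $|T \cap \Omega| \lesssim |T'|$,
\[
\|p - p_h^e\|_{T \cap \Omega} \lesssim \|p^e - \pi_B p^e\|_{B_{T,S_h}} + \|\pi_B p^e - \pi_{T'} p^e\|_{T'} + \|\pi_{\mathcal{T}} p - (p_h - \bar p_h^{\Omega_\mathcal{T}})\|_{T'},
\]
where the first two summands are $\lesssim h|p^e|_{H^1(B_{T,S_h})}$ by Poincar\'e. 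Squaring and summing over $T \in \mathcal{T}_\Gamma$, the finite overlap of the balls $\{B_{T,S_h}\}$ and of the images $\{S_h(T)\}$ absorbs these local pieces into the global quantities $h\|p\|_{H^1(\Omega)}$ and the $O(h)$ bound from Theorem~\ref{thm:disc_error}, yielding the claim.

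The main obstacle is the bookkeeping of $\bar p_h^{\Omega_\mathcal{T}}$: since $p_h$ does not live in a zero-mean space, one must exploit $\overline{\pi_{\mathcal{T}} p}^{\Omega_\mathcal{T}} = 0$ to transfer the mean-zero condition on $p$ through the discrete scheme. Once this cancellation is identified, the rest reduces to standard polynomial approximation on balls and the finite-overlap properties of the extension mapping $S_h$ documented in \cite{BHL22}.
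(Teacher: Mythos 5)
Your proposal is correct and follows essentially the same route as the paper: triangle inequalities against $\pi_h\bu^e$, $\pi_\Gamma\blambda^e$ and Theorem~\ref{thm:disc_error} for the first estimate, then the interior bound via $\pi_{\mathcal{T}}$ plus the cut-element decomposition through $S_h(T)$, the ball projection $\pi_B$, and finite overlap for the pressure. You merely make explicit two points the paper leaves implicit (the Poincar\'e upgrade from the gradient seminorm plus boundary trace to the full $H^1(\Omega)$ norm, and the bookkeeping of $\bar p_h^{\Omega_{\mathcal{T}}}$ via $\overline{\pi_{\mathcal{T}}p}^{\Omega_{\mathcal{T}}}=0$), which is fine.
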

\begin{proof}
The first two terms on the left hand side are bounded using the triangle inequality, 
\[
\|\bu - \bu_h\|_{[H^1(\Omega)]^d} \leq \|\bu_h - \pi_h \bu^e\|_{[H^1(\Omega)]^d} + \|\bu - \pi_h \bu^e\|_{[H^1(\Omega)]^d} 
\]
and
\[
\|\blambda - \blambda_h \|_{\Gamma}\leq  \|\blambda_h - \pi_\Gamma \blambda^e\|_{\Gamma}+\|\blambda - \pi_\Gamma \blambda^e\|_{\Gamma}
\]
and we see that the first terms of the right hand sides satisfies the bound thanks to Theorem \ref{thm:disc_error}. The second is bounded by the interpolation estimate \eqref{eq:approx} and \eqref{eq:mulitplier_approx}. For the bound on the pressure we see that by the definition of $\pi_{\mathcal{T}}$ there holds
\[
\|p - \pi_{\mathcal{T}} p\|_{\Omega_I} \lesssim h \|p\|_{H^1(\Omega_I)}
\]
and the interior bound
\[
\|p -  p^e_h\|_{\Omega_I} \lesssim \|p - \pi_{\mathcal{T}} p\|_{\Omega_I} + \underbrace{\|p^e_h - \pi_{\mathcal{T}} p\|_{\Omega_I}}_{\lesssim \|p_h - \pi_{\mathcal{T}} p\|_{0,\mathcal{T}_h}} \lesssim h (\|p\|_{H^1(\Omega_I)}+\|\bu\|_{[H^2(\Omega)]^d} + \|\blambda\|_{[H^{\frac12}(\Gamma)]^d})
\]
follows by using a triangle inequality and the discrete error bound of Theorem \ref{thm:disc_error}.
By the definition of $p_h^e$ we see that
\[
\|p - p_h^e\|_{\Omega}^2 = \|p -  p_h^e\|_{\Omega_I}^2 + \sum_{T \in \mathcal{T}_\Gamma} \|p^e - p_h^e\|^2_T.
\]
For each term of the sum in the right hand side we add and subtract $(\pi_{\mathcal{T}} p)^e$,
\begin{equation}\label{eq:pressure_split}
\|p^e - p_h^e\|_T \leq \|E_{S_h(T)} (\pi_{\mathcal{T}} p - p_h) - \bar p_h^{\Omega_{\mathcal{T}}}\|_T + \|p^e - E_{S_h(T)} \pi_{\mathcal{T}} p\|_T.
\end{equation}
For the first term in the right hand side we see that
\[
\|E_{S_h(T)} (\pi_{\mathcal{T}} p - p_h)- \bar p_h^{\Omega_{\mathcal{T}}}\|_T \lesssim \|\pi_{\mathcal{T}} p - p_h- \bar p_h^{\Omega_{\mathcal{T}}}\|_{S_h(T)}
\]
and summing over the elements we obtain
\[
\sum_{T \in \mathcal{T}_\Gamma} \|E_{S_h(T)} (\pi_{\mathcal{T}} p - p_h)- \bar p_h^{\Omega_{\mathcal{T}}}\|_T^2 \lesssim  \|\pi_{\mathcal{T}} p - p_h^e\|_{S_h(T)}^2 \lesssim \underbrace{\|\pi_{\mathcal{T}} p - p^e_h \|_{\Omega_I}}_{\lesssim \| \pi_{\mathcal{T}} p - p_h\|_{0,\mathcal{T}_h}}.
\]
The right hand side is once again bounded by Theorem \ref{thm:disc_error}.
For the second term on the right hand side of \eqref{eq:pressure_split} we add and subtract $\pi_B (p^e)$ to obtain
\[
\|p^e - E_{S_h(T)} \pi_{\mathcal{T}} p\|^2_T \lesssim \|p^e - \pi_B (p^e)\|_{B_{T,S_h}}^2 + \|E_{S_h(T)} \pi_{\mathcal{T}} p - \pi_B (p^e)\|_T^2.
\]
For the second term of the right hand side we have using the definition of $E_{S_h(T)}$ and the local stability of $\pi_{\mathcal{T}}$ for elements in $\mathcal{T}_I$,
\[
\|E_{S_h(T)} \pi_{\mathcal{T}} p - \pi_B p\|_T^2  = \|E_{S_h(T)} (\pi_{\mathcal{T}} p - \pi_B p)\|_T^2 \lesssim \|\pi_{\mathcal{T}} (p - \pi_B (p^e))\|_{S_h(T)}^2 \lesssim \|p^e - \pi_B (p^e)\|_{B_{T,S_h}}^2.
\]
It follows that 
\[
\|p^e - E_{S_h(T)} \pi_{\mathcal{T}} p\|^2_T  \lesssim \|p - \pi_B p\|_{B_{T,S_h}}^2 \lesssim h^2 |p^e|_{H^1(B_{T,S_h})}^2
\]
and the conclusion follows using the stability of the extension and the finite overlap of the balls $B_{T,S_h}$.
\end{proof}
\begin{rem}
The solutions of the Nitsche formulation \eqref{eq:Nit} satisfy similar bounds, the proof only differs in the treatment of the Nitsche terms that is a standard argument and therefore omitted.
\end{rem}
\begin{rem}
We observe that the bulk pressure does not appear in the right hand side of the bound of Theorem \ref{thm:disc_error}. On the other hand the multiplier includes the pressure forces on the boundary and therefore we expect the method to be robust with respect to the bulk pressure, but not the boundary pressure. This is illustrated in the numerical section.
\end{rem}
\begin{rem}
Observe that the estimate on the stresses is in an h-weighted $L^2$-norm and suboptimal by $O(h^{\frac12})$ in the $L^2(\Gamma)$ norm.
To obtain an estimate of order $O(h)$, but in a weaker norm, we define
\[
\|\bv\|_{-\frac12,div} := \sup_{\substack{\bw \in H_0^{div}\\ \|\bw\|_{H^1}=1}} (\bv,\bw)_\Gamma.
\]
We can then prove the estimate
\[
\|\blambda - \blambda_h\|_{-\frac12,div} \lesssim h (\|\bu\|_{[H^2(\Omega)]^d} + \|\blambda\|_{[H^{\frac12}(\Gamma)]^d}).
\]
This follows from
\[
\|\blambda - \blambda_h\|_{-\frac12,div} = \sup_{\substack{\bw \in H_0^{div}\\ \|\bw\|_{H^1}=1}} (\blambda - \blambda_h,\bw)_\Gamma
\]
and using the formulation we have 
\[
(\blambda - \blambda_h,\bw)_\Gamma = (\blambda - \blambda_h,\bw - \pi_h \bw)_\Gamma - a(\bu - \bu_h, \pi_h \bw),
\]
recalling that $b_h(\cdot,  \pi_h \bw) = 0$, since $\nabla \cdot \pi_h \bw = 0$.

For the first term we use Cauchy-Schwarz inequality and local trace inequalities, followed by \eqref{eq:approx}
\[
(\blambda - \blambda_h,\bw - \pi_h \bw^e)_\Gamma  \lesssim \|h^{\frac12}(\blambda - \blambda_h)\|_{\Gamma} (h^{-1} \|\bw - \pi_h \bw)\|_{\Omega_\Gamma} + \|\nabla (\bw - \pi_h \bw) \|_{\Omega_\Gamma}) \lesssim \|h^{\frac12}(\blambda - \blambda_h)\|_{\Gamma} .
\]
On the other hand using the Cauchy-Schwarz inequality we have for the second term
\[
a(\bu - \bu_h, \pi_h \bw) \lesssim \|\nabla (\bu - \bu_h)\|_\Omega \|\pi_h \bw\|_{H^1(\Omega)} \lesssim \|\nabla (\bu - \bu_h)\|_\Omega. 
\]
We may conclude by adding the above bounds and applying Corollary \ref{cor:cont_error}.
\[
\|\blambda - \blambda_h\|_{-\frac12,div} \lesssim \|h^{\frac12}(\blambda - \blambda_h)\|_{\Gamma} + \|\nabla (\bu - \bu_h)\|_\Omega \lesssim h (\|\bu\|_{[H^2(\Omega)]^d} + \|\blambda\|_{[H^{\frac12}(\Gamma)]^d}).
\]
\end{rem}

\section{Numerical Example}
In this section we will validate the theoretical results for the method introduced in section \ref{sec:lagange} numerically. The method has been implemented in a matlab code that can be obtained upon request from the second author. We study the convergence of the approximate solution on an academic model problem. The divergence free property is verified and the properties of the pressure extension validated. Then we consider a test case for pressure robustness and show that when weak imposition of boundary conditions are used pressure robustness fails both in the fitted and the unfitted case.
\subsection{Convergence}

We consider the disk with center at the origin and radius $r=1/2$. On this disc we consider a boundary driven solution (with $\ff ={\mathbf{0}}$):
\begin{align}
u_x= &{} 20 x y^3 \\
u_y= &{} 5x^4-5y^4\\
p=&{}60 x^2 y -20 y^3 
\end{align}
We note that as $\bu = \bu_\Gamma\neq \bf 0$ on the boundary, an additional forcing term 
\begin{equation}
l_\Gamma(\bmu) :=  \int_{\Gamma} \bu_\Gamma\cdot  \bmu ~\mbox{d}s
\end{equation}
must be added to the right-hand side of (\ref{eq:FEM_brinkman}). We also remark that since the divergence equation is integrated over the whole of the cut elements, whereas the elliptic term is only integrated over the cut parts, severe ill-conditioning may occur. To remedy this, we add 
an additional stabilizing term on the whole of the cut elements as discussed in Remark \ref{rem:ghost},
\begin{equation}
 a_\text{stab}(\bu_h,\bv_h) := \sum_{T\in\mathcal{T}_\Gamma}h^2_T(\nabla\times \bu_h,\nabla \times \bv_h)_{T}.
\end{equation}
Here we chose a curl based operator to add a minimal perturbation to the method. This term improved the conditioning sufficiently to solve the linear system, but it would not be adequate for use with Nitsche's method.
We set the stabilization parameter for the Lagrange multiplier $\gamma =1$.

In Fig. \ref{fig:velocity} we show the velocity errors in $L_2(\Omega)$ and $H^1(\Omega)$. The dashed line has inclination 1:1 and the dotted line 2:1. We note that the obtained $L_2$ convergence is slightly better than expected but seems to tend to the expected 2:1 on finer meshes. The $H^1$ convergence is $O(h)$ as expected.
Fig.  \ref{fig:pressure} shows the convergence of the pressure in $L_2(\Omega_I)$ (the cut elements being omitted) and the multiplier $\blambda_h$ in $L_2(\Gamma)$. We observe $O(h)$ convergence in both cases. For computing the value of $\blambda$ we used the discrete normals. In the examples we have used $h:=1/\sqrt{N}$ where $N$ is the number of nodes on the macro elements of $ \Omega_\mathcal{T}$.

In Fig. \ref{fig:presselev} we show elevations of the computed pressure and exact (represented as piecewise constant) pressure on $\Omega$. Note that the discrete pressure is zero on the cut elements. In Fig. \ref{fig:lambdaelev} we show an elevation of the discrete multiplier and the interpolated exact multiplier represented on $\Omega_\Gamma$, and
in Fig. \ref{fig:divergence} we show the divergence the approximate velocities on $\Omega$ which is indeed small everywhere.

To show the effect of recovering the pressure on the cut elements, we consider the simplest possible method: a constant extension from the closest uncut neighbour (closeness based on distance between centroids).  In Fig. \ref{fig:extension} we show the pressure before and after recovery, and, finally in Fig. \ref{fig:pressure2} we show convergence of the recovered pressure in $L_2(\Omega)$, which is $O(h)$ , as expected.

\subsection{Pressure robustness}

We consider a problem with Coriolis force, following John et al. \cite{JLMNR17}. To this end, we use the model 
\begin{equation}
-\Delta\bu +\nabla p + 2{\boldsymbol \omega}\times \bu = \boldsymbol f
\end{equation}
in two dimension with ${\boldsymbol \omega} = (0,0,\omega)$. Then the magnitude of $\omega$ will only affect the pressure, cf. \cite{JLMNR17}. If the discrete scheme possesses this quality, we may say that it is {\em pressure robust}. The element underlying our method has this quality, but when applying weak boundary conditions, we find that the control of velocities on the boundary is insufficient to retain it (cut {\em and} standard formulation). An increase in $\omega$ will eventually lead to disturbance of the boundary velocity which spreads to the interior. To illustrate this, we consider the same domain as in the previous section and apply boundary conditions $\bu = (1,0)$ and ${\boldsymbol f}=\bf 0$. In Figs. \ref{fig:1000} and \ref{fig:10000} we show elevations of the $y-$component of the computed flow. The instability increases from being virtually non-noticeable at $\omega=0$ to giving completely wrong velocity solution at $\omega=10000$ ($y-$component non zero throughout $\Omega$).
Note also that the disturbance is induced by errors at the boundary which seem to increase linearly with $\omega$.

\section{Conclusions}
We developed two different cut finite element methods for the approximation of incompressible viscous flow using pointwise divergence free elements. The key observations was to enforce the divergence free condition globally on the computational mesh and separate the bulk pressure approximation from the boundary pressure approximation. Optimal error bounds were derived with upper bounds for the velocity error independent of the pressure regularity. The addition of degrees of freedom for the boundary pressure leads to a slightly more complicated method, on the other hand fewer integrals over cut elements have to be evaluated so the assembly of the system may not necessarily be more expensive. In view of the rather complete analysis obtained we consider this a small price to pay. One possible extension of the present approach is to $H(div)$-conforming approximation using the Raviart-Thomas space. This element has similar stability structure as the $H^1$-conforming element considered herein and  one would therefore expect to obtain robustness and optimality for similar unfitted approximations of Darcy's equation.

\section*{Acknowledgements.}
The authors wish to thank the anonymous reviewers whose constructive comments helped make this a better paper.
This research was supported in part by the Swedish Research
Council Grants Nos.\  2017-03911, 2018-05262,  2021-04925,  2022-03908,  and the Swedish
Research Programme Essence.  EB was supported in part by the EPSRC grants EP/P01576X/1 and EP/T033126/1.

\section*{Appendix}

\subsection{Construction of the Finite Element Space}  \label{sec:construction-element}
We here present a construction of the space $\bV_h$ introduced in Section \ref{sec:FEMconst}.  The 
finite element space $\bV_h(T) = \bV_h|_T$ consists of the linear vector valued polynomials together with face bubbles, one 
for each face of the element,  which have constant divergence. The face bubbles are continuous piecewise linear functions on a certain partition of $T$ into simplexes. To construct the face bubbles we first construct a bubble function $\varphi_T$ on $T$. To that end we let $\mcS(T)$ be a partition of $T$ into sub-simplexes 
constructed by inserting a node $\bx_T$ in the interior of $T$ and then adding edges between the vertices of $T$ and $\bx_T$. Typically, we chose $\bx_T$ as the barycenter of $T$. This choice is not necessary,  but we will see that it is leads to 
very simple expressions for the basis functions.  We 
let the bubble function $\varphi_T$ be continuous piecewise linear on $\mcS(T)$, equal 
to one in $\bx_T$, and zero in the vertices of $T$. Next consider a face $F$ of $T$ and let $T_F \in \mcS(T)$ be the sub-simplex associated with $F$. We begin by inserting a node in an arbitrary point $\bx_F$ in the interior of the 
face  $F$ and then we partition $F$ into sub-simplexes by inserting edges from $\bx_F$ to the vertices of $F$. We 
let $\mcS(T_F)$ be the partition of $T_F$ into sub-simplexes obtained by inserting edges from $\bx_T$ to the vertices of $F$ and $\bx_F$. On $\mcS(T_F)$ we let $\varphi_F$ be the continuous piecewise linear face bubble,  which is one in $\bx_F$ and zero in $\bx_T$ and the vertices of $F$. We extend $\varphi_F$ to $T$ by zero. See Figure \ref{fig:element} for the partitions of the element $T$ into sub-simplexes. The vector valued face bubble $\bvarphi_F$ is now defined by
\begin{equation*}
\bvarphi_F := \bnu_F \varphi_F + \alpha \bvarphi_T \mbox{ with } \bvarphi_T := (\bx_T - \bx_{T,F} ) \varphi_T, 
\end{equation*}
here $\bnu_F = (\bx_F - \bx_T)/\| \bx_F - \bx_T \|_{\IR^d}$, $\bx_{T,F}$ is the vertex of $T$ opposite to 
$F$, and $\alpha\in \IR$ is a parameter that we will see can be determined such that the divergence 
\begin{equation*}
\nabla \cdot \bvarphi_F =  \nabla  \cdot (\bnu_F \varphi_F+ \alpha ( (\bx_T - \bx_{T,F} ) \varphi_T )
\end{equation*}
is constant on $T$. 

To verify that we can indeed find such an $\alpha$ we make the following observation. Consider a general simplex 
$\tT$. Let $\tF$ be a face of $\tT$ and $\tbx$ the vertex opposite to $\tF$. Let $\tvarphi$ be the linear 
function on $\tT$, which is one in $\tbx$ and zero on $\tF$.  Let $\tby$ be one of the nodes of $\tT$ that belong 
to $\tF$ and consider the vector valued function $\tbvarphi = (\tbx - \tby) \tvarphi$. Then we have
\begin{equation*}
\nabla \cdot \tbvarphi   = (\tbx - \tby) \cdot \nabla \tvarphi = (\tbx - \tby) \cdot \frac{\tilde{\bn} }{\tilde{l}}, 
\end{equation*}
where $\tilde{\bn}$ is the unit normal of $\tF$ directed towards $\tbx$ and $\tilde{l}$ is the distance from $\tbx$ to 
$\tF$. We then note that
\begin{equation*}
(\tbx - \tby) \cdot \tilde{\bn} = \|\tbx - \tby\|_{\IR^d} \sin \tilde{\theta}, \qquad  \tilde{l} =  \|\tbx - \tby\|_{\IR^d} \sin \tilde{\theta},
\end{equation*}
where $\tilde{\theta}$ is the angle between the vector $\tbx - \tby$ and $\tF$, and we may conclude that 
\begin{align*}
\nabla \cdot \tbvarphi = 1.
\end{align*}
With this observation at hand we have
\begin{align*}
\nabla \cdot \bvarphi_T = 1 \quad \text{ on $ \mcS(T) \setminus T_F$} 
\end{align*}
and using the fact that $(\nabla \cdot \bvarphi_T, 1)_T = (\bn \cdot \bvarphi_T, 1)_{\partial T} = 0$, we get 
\begin{align*}
\nabla \cdot \bvarphi_T = - \frac{|T| - |T_F|}{|T_F|} \quad \text{on $T_F$}.
\end{align*}
Using again the same observation we have 
\begin{align*}
\nabla \cdot \bvarphi_F = \frac{1}{\| \bx_F - \bx_T \|_{\IR^d}} \nabla \cdot ( (\bx_F - \bx_T ) \varphi_F ) 
= \frac{1}{\| \bx_F - \bx_T \|_{\IR^d}}
\quad \text{on $\mcS(T_F)$}
\end{align*}
and by definition
\begin{align*}
\nabla \cdot \bvarphi_F = 0 \quad \text{on $T \setminus T_F$}.
\end{align*}
Since $\nabla \cdot \bvarphi_T = \nabla \cdot (\alpha (\bx_T - \bx_{T,F} ) \varphi_T ) = \alpha$ on 
$T \setminus T_F$ we seek $\alpha$ such that 
\begin{align*}
\frac{1}{\| \bx_F - \bx_T \|_{\IR^d}}  - \alpha  \frac{|T| - |T_F|}{|T_F|} = \alpha,
\end{align*}
which gives 
\begin{align*}
\alpha =  \frac{|T_F|}{|T|}   \frac{1}{\| \bx_F - \bx_T \|_{\IR^d}}  
\end{align*}
and 
\begin{align*}
\bvarphi_F = \bnu_F \varphi_F +  \frac{|T_F|}{|T|}   \frac{\bx_T - \bx_{T,F}}{\| \bx_F - \bx_T \|_{\IR^d}}  \varphi_T,
\end{align*}
with divergence 
\begin{align*}
\nabla \cdot \bvarphi_F = \alpha \quad \text{on $T$}.
\end{align*}
In the case when $\bx_T$ is the barycenter of $T$ we have the identity 
\begin{align}
|T_F| = \frac{1}{d+1}|T|
\end{align}
and we get the simplified expression
\begin{align*}
\bvarphi_F = \bnu_F \varphi_F +  \frac{1}{d+1}   \frac{\bx_T - \bx_{T,F}}{\| \bx_F - \bx_T \|_{\IR^d}}  \varphi_T.
\end{align*}
This is the explicit expression for the finite element face bubbles and coincides with the discussion of \cite[Section 3]{BCH20}.

Finally, to obtain a global continuous finite element space we chose $\bx_F$ on an interior face $F$ shared by two 
elements $T$ and $T'$ as the intersection between the line that passes through the barycenters of $T$ and $T'$ and the face $F$, see Figure \ref{fig:elementpair}. Then we have $\bnu_F = -\bnu'_F$ and therefore we can define a global continuous basis function associated with the face $F$

For faces on the boundary we take $\bx_F$ to be the barycenter of $F$.

\subsection{Proof of Estimate \eqref{eq:disc_interp}}
It is enough to consider one component $\Gamma_i$. Define $\mathcal{T}_{\Gamma_i}:= \{T \in \mathcal{T}_\Gamma: T\cap \Gamma_i \ne \emptyset\}$. By the definition of the average we have for all $\by \in \mathbb{R}^d$
\[
h^{\frac12}\|\bmu - \bar \bmu^{\Gamma_i}\|_{\Gamma_i} \leq h^{\frac12} \|\bmu - \by\|_{\Gamma_i} \lesssim \|\bmu - \by\|_{\mathcal{T}_{\Gamma_i}}.
\]
Now fix $\by = \bmu\vert_T$ for some $T \in \mathcal{T}_{\Gamma_i}$. Clearly then for all $T \in \mathcal{T}_{\Gamma_i}$ $\|\bmu - \by\|_T \lesssim \sum_{F \in \mathcal{F}_i(\mathcal{T}_{\Gamma_i})} \|h^{\frac12} [\bmu]\|_F^2$. Since $M$ is bounded we conclude by summing over $T \in \mathcal{T}_{\Gamma_i}$.
\subsection{Construction of $\bvarphi$}
It is sufficient to consider one component of $\bvarphi$. Let $\varphi$ denote any component of $\bvarphi$.
Let $B_i$ be the largest ball with center $\bx_i$ on $\Gamma_i$, such that $B_i \cap \Omega_{\mathcal{T}} \subset P_i$,  see Figure \ref{fig:patch}. We assume that its radius is $r_i$. By the construction of $P_i$ we may assume that $r_i = O(M h)$.  Define the function $\phi(\bx) = max(1 - |\bx-\bx_i|/r_i, 0)$. It follows that $\phi(\bx_i) = 1$ and $\phi(\bx) =  0$ for $\bx \ne B_i$. It follows by construction that $\sup_{\bx \in P_i}|\nabla\phi(\bx)| \lesssim (M h)^{-1}$ and there exists $c_0>0$ so that $\overline \phi^{\Gamma_i} > c_0$. Let $\psi_h = i_h \phi$, where $i_h$ is the nodal interpolant in the vertices of the mesh, and normalize  with $0 < c_1 = \overline \psi_h^{\Gamma_i}$ so that $\phi_h := c_1^{-1}  \psi_h$ and $\overline\phi_h^{\Gamma_i} = 1$. We may then take $\varphi = h a \phi_h$ for some $a \in \mathbb{R}$. It remains to prove that $\|\nabla \varphi\|_{\Gamma_i} + (M h)^{-\frac12} \|\nabla \varphi\|_{P_i} \lesssim M^{-1} \|a\|_{\Gamma_i}$.
By the construction of $\varphi$ we have since $\|\nabla \phi_h\|_{L^\infty(P_i)} \lesssim c_1^{-1} \|\nabla \phi\|_{L^\infty(P_i)}$,
\[
\|\nabla \varphi\|_{\Gamma_i} \lesssim c_1^{-1} a h \|\nabla \phi\|_{L^\infty(P_i)} (M h)^{\frac{d-1}{2}} \lesssim c_1^{-1} a h (M h)^{-1}  (M h)^{\frac{d-1}{2}}. 
\]
Since $a  (M h)^{\frac{d-1}{2}} \lesssim \|a\|_{\Gamma_i}$ we conclude that
\[
\|\nabla \varphi\|_{\Gamma_i} \lesssim M^{-1} \|a\|_{\Gamma_i}.
\]
On the patch $P_i$ we have similarly
\[
\|\nabla \varphi\|_{P_i} \lesssim c_1^{-1} a h \|\nabla \phi\|_{L^\infty(P_i)} (M h)^{\frac{d}{2}} \lesssim c_1^{-1} h^{\frac12} M^{-\frac12} \|a\|_{\Gamma_i}.
\]

\bibliographystyle{abbrv}
\bibliography{references}
\newpage
\begin{figure}
\begin{center}

\includegraphics[scale=0.4]{./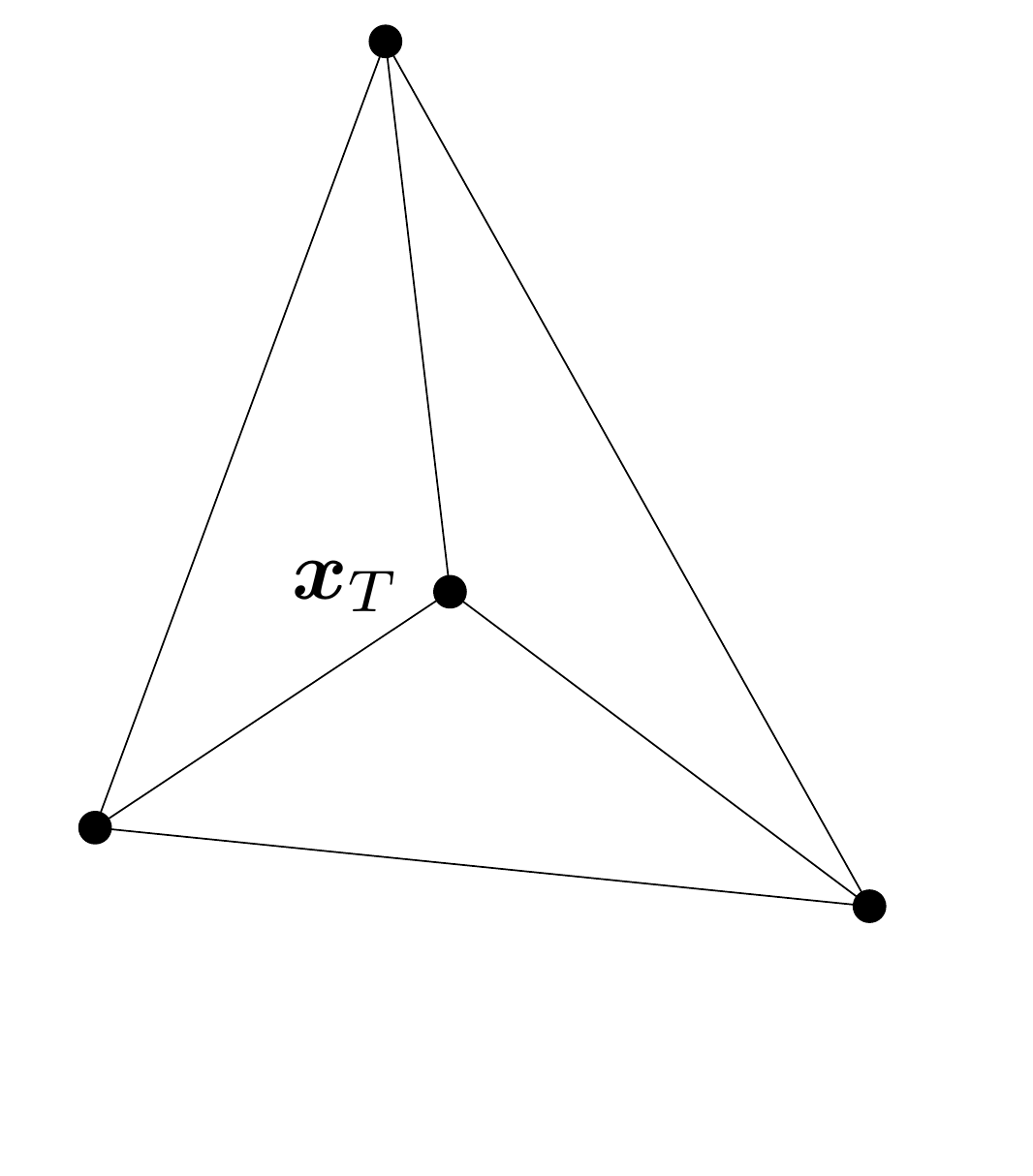}
\includegraphics[scale=0.4]{./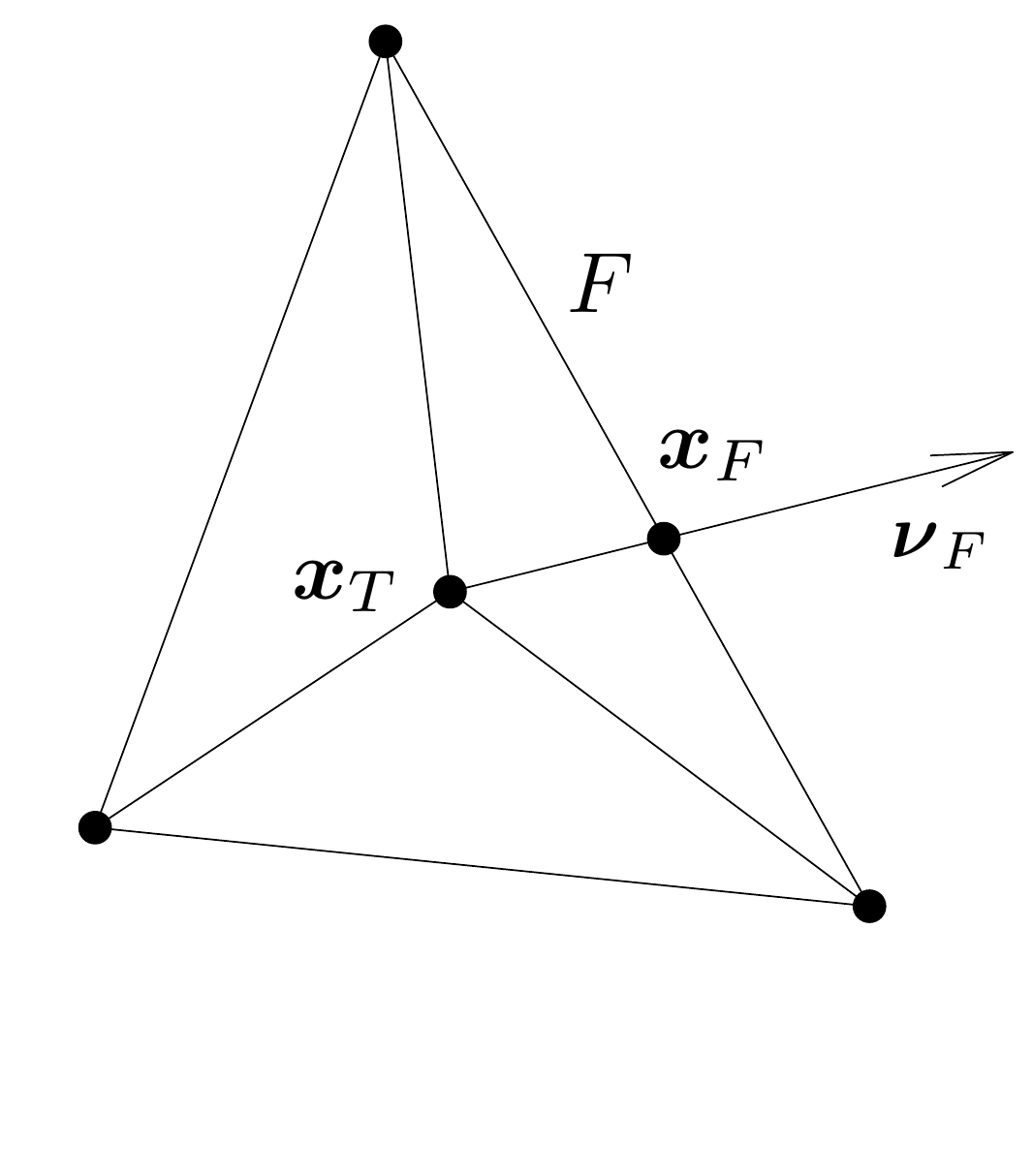}
\includegraphics[scale=0.4]{./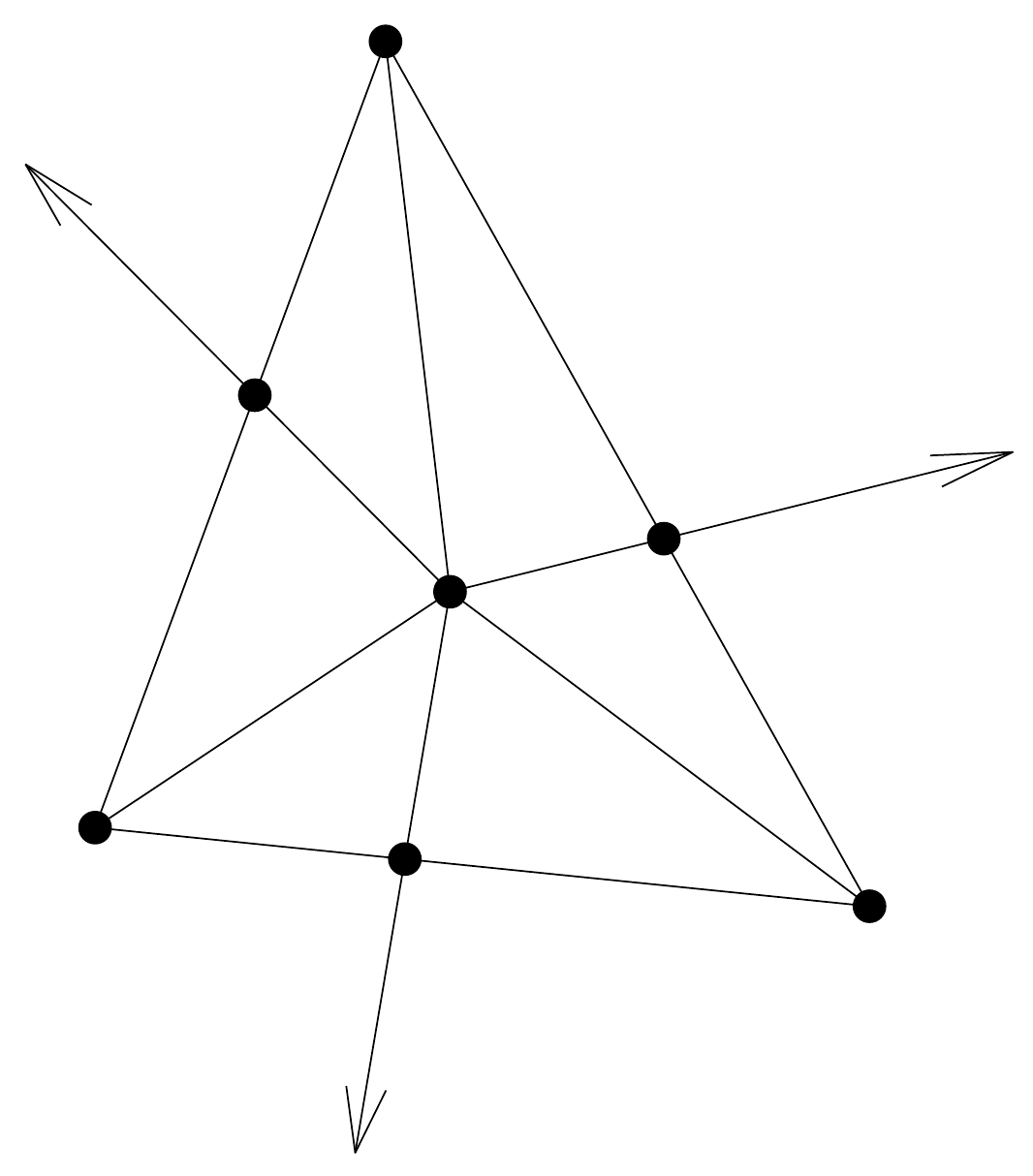}
\caption{From left to right: The partition for the element bubble $\varphi_T$, the partition for the face bubble $\bvarphi_F$, and the partition for all the degrees of freedom of the element.}
\label{fig:element}

\end{center}
\end{figure}

\begin{figure}
\begin{center}

\includegraphics[scale=0.4]{./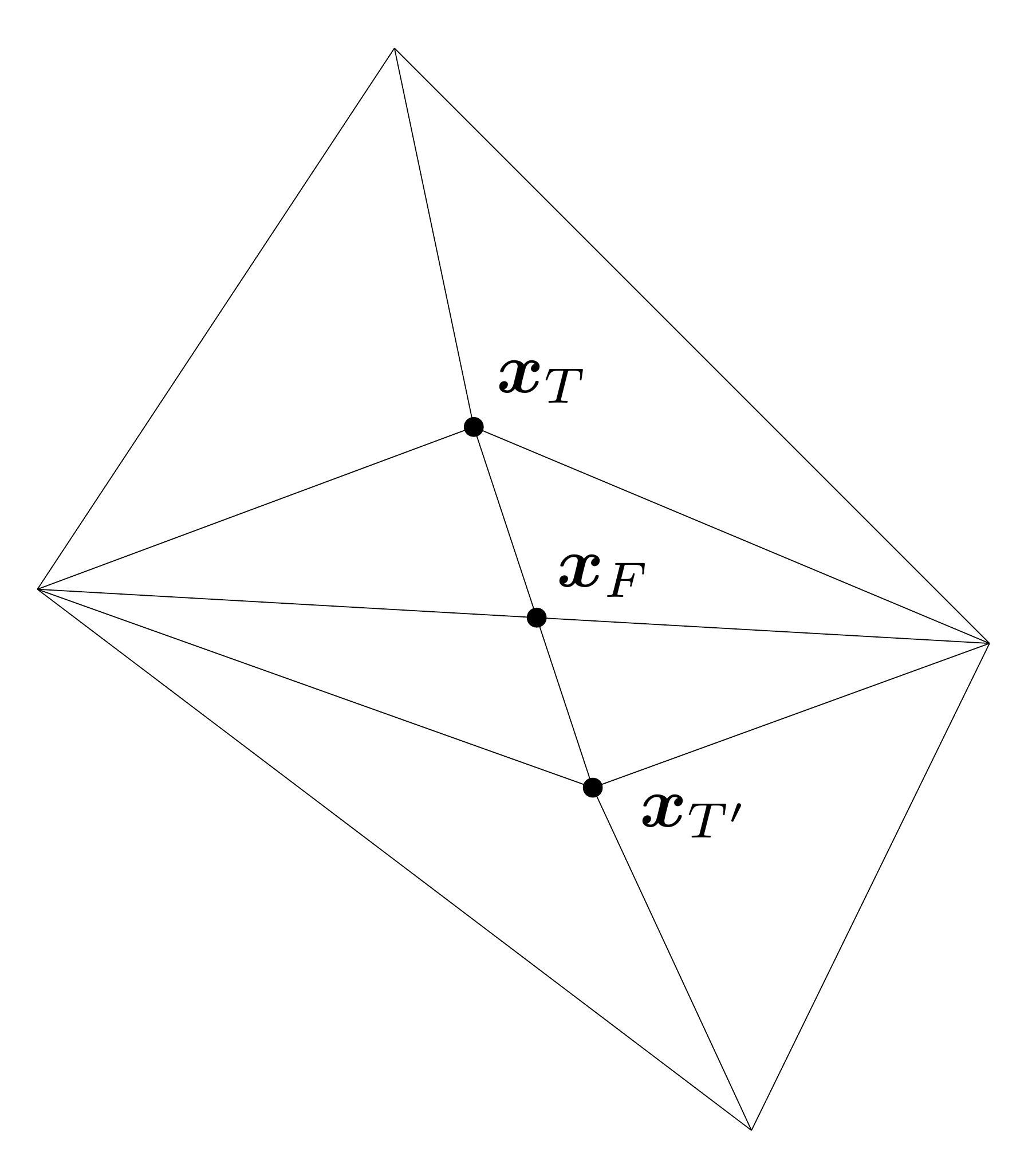}

\caption{The definition of $\bx_F$ for a face shared by elements $T$ and $T'$ as the intersection of the face $F$ and the line between the barycenters $\bx_T$ and $\bx_{T'}$, which enables us to construct a global continuous velocity space.}  
\label{fig:elementpair}

\end{center}
\end{figure}

\begin{figure}
\begin{center}

\includegraphics[scale=0.2]{./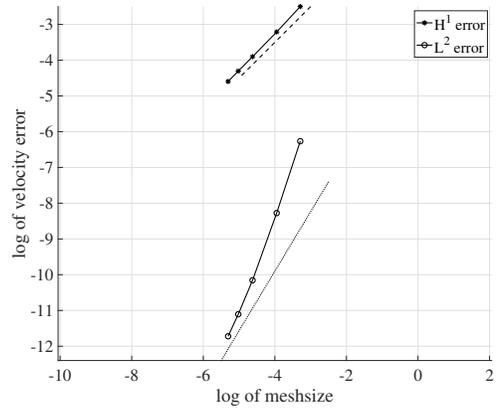}

\caption{Convergence of the velocity. \label{fig:velocity}}  

\end{center}
\end{figure}
\begin{figure}
\begin{center}

\includegraphics[scale=0.2]{./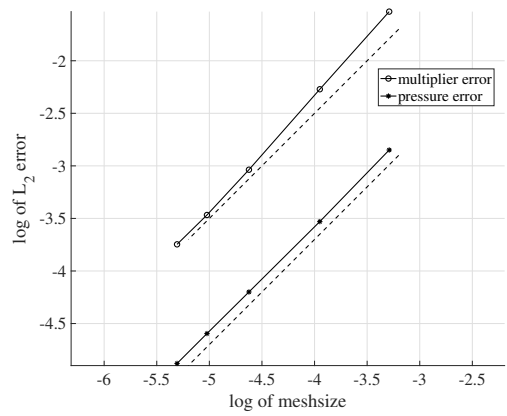}

\caption{Convergence of the pressure and boundary multiplier. \label{fig:pressure}}  

\end{center}
\end{figure}
\begin{figure} 
\begin{center}

\includegraphics[scale=0.15]{./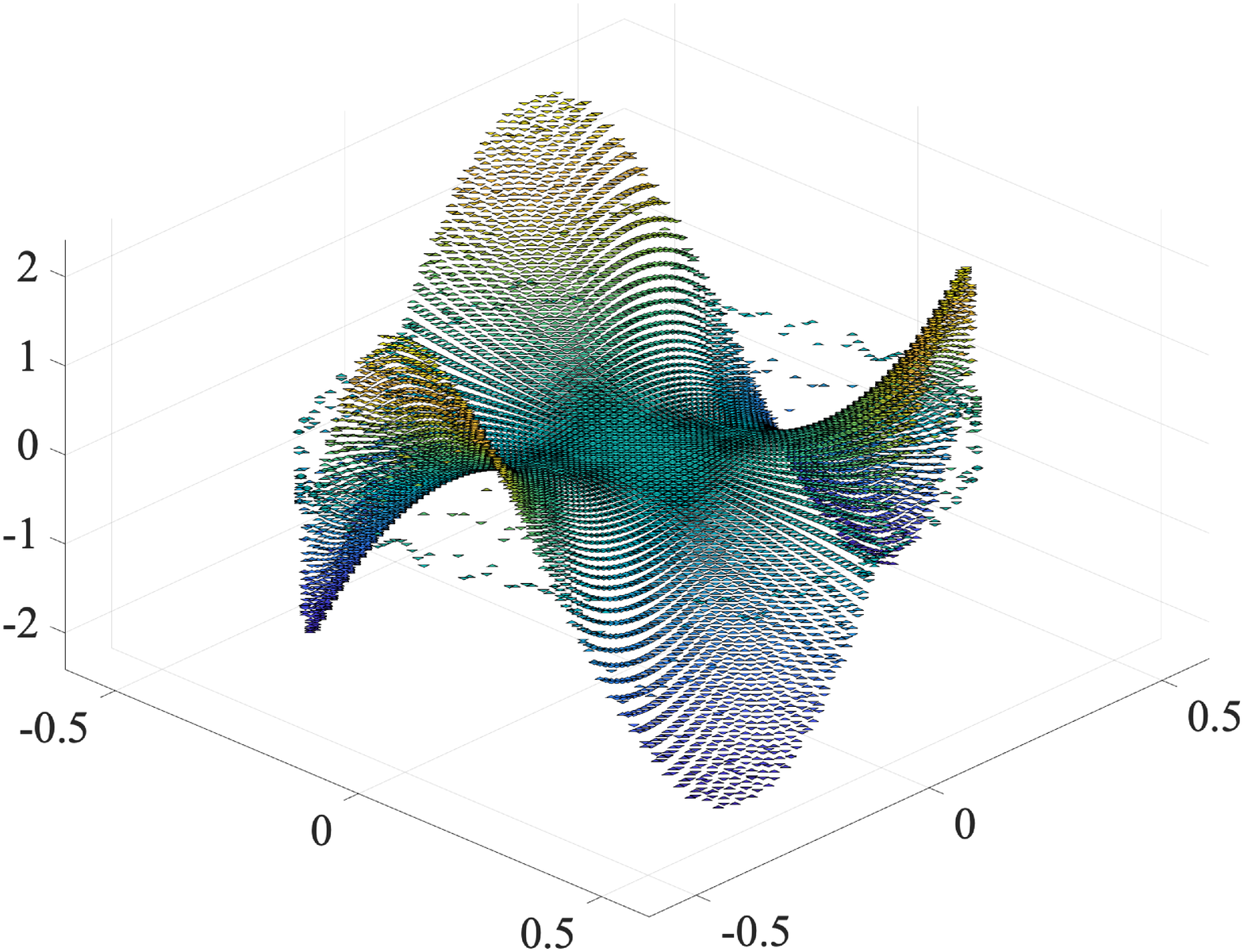}
\includegraphics[scale=0.15]{./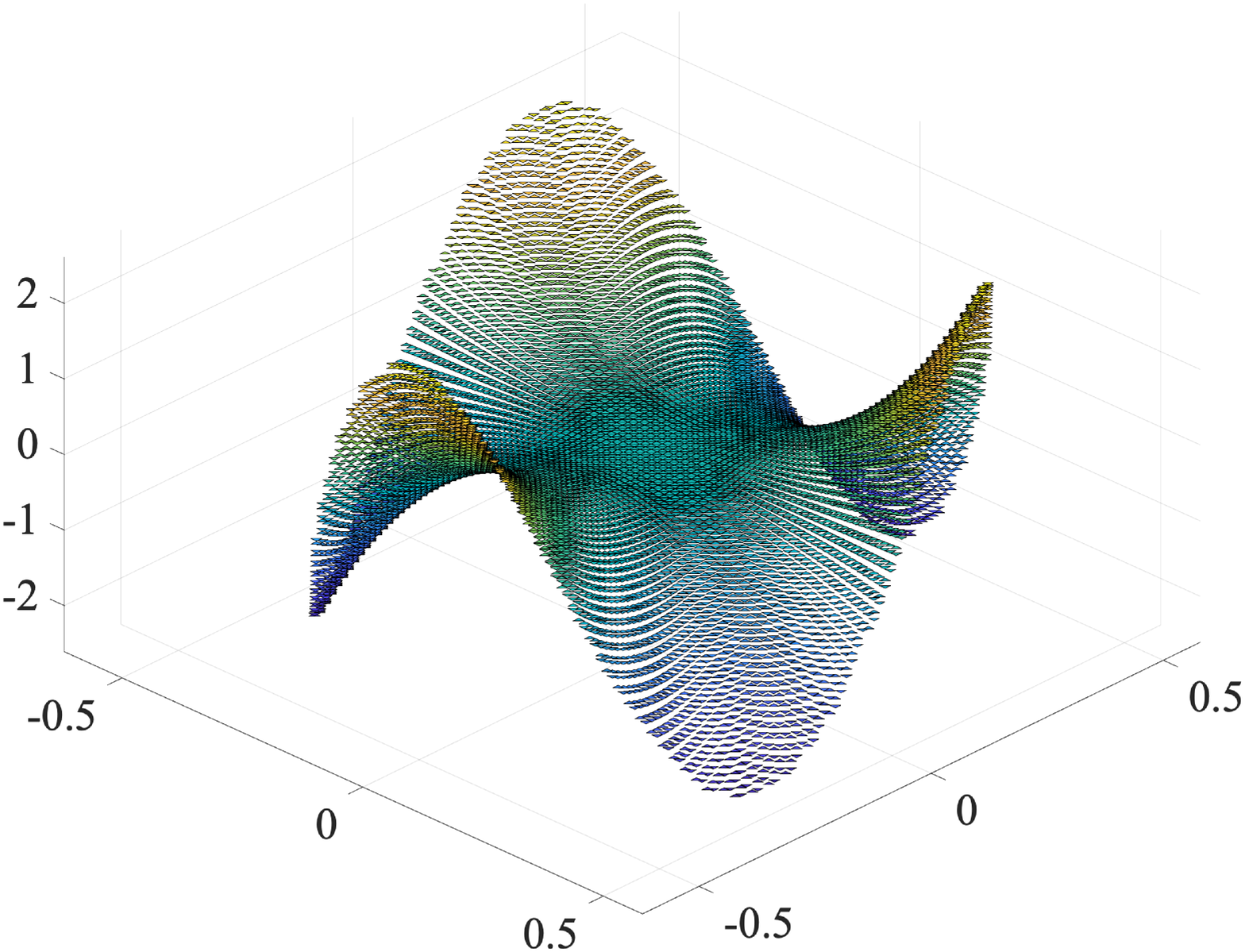}
\caption{Elevation of the discrete and interpolated exact pressure on a particular mesh.\label{fig:presselev}}  

\end{center}
\end{figure}
\begin{figure}
\begin{center}

\includegraphics[scale=0.15]{./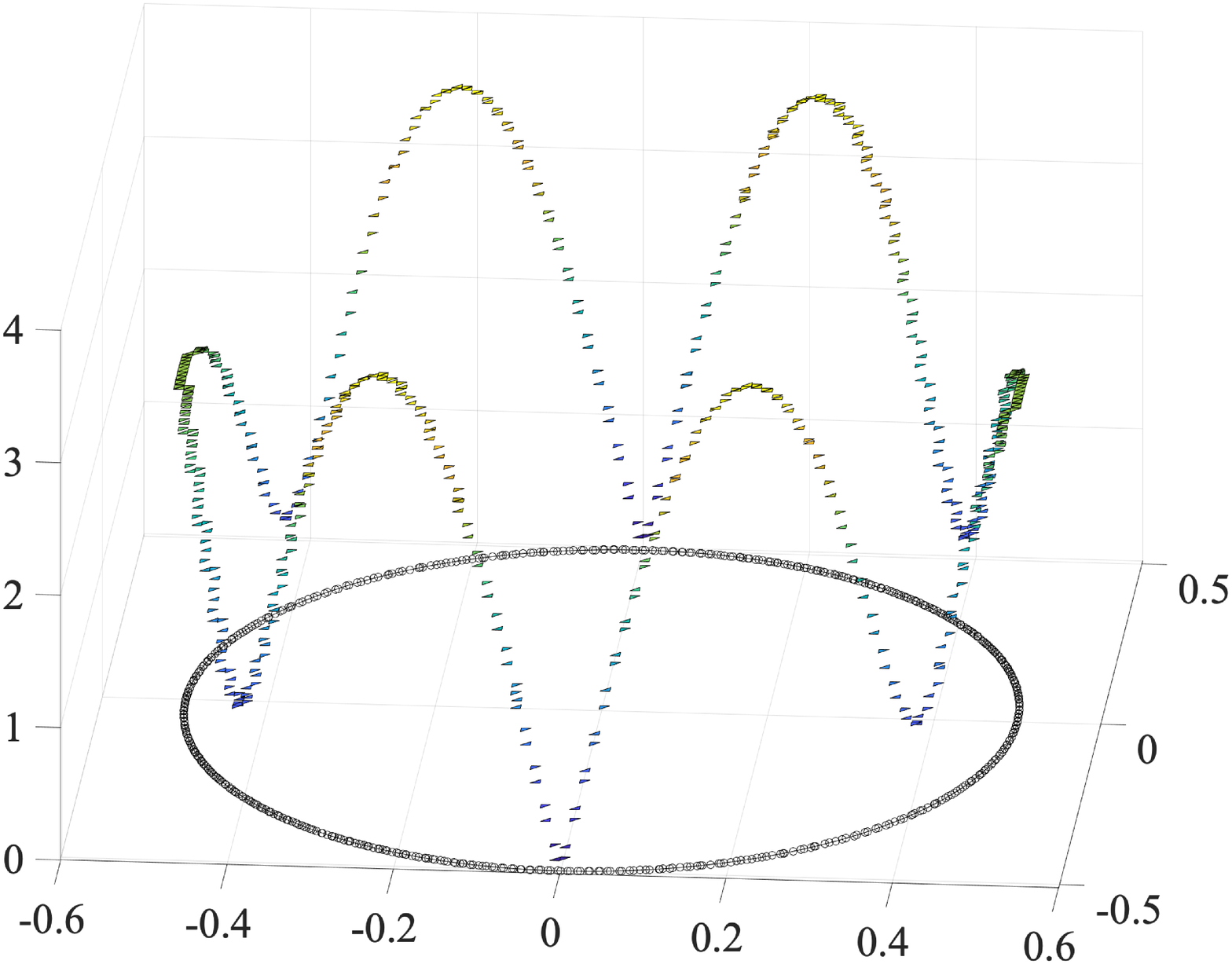}
\includegraphics[scale=0.15]{./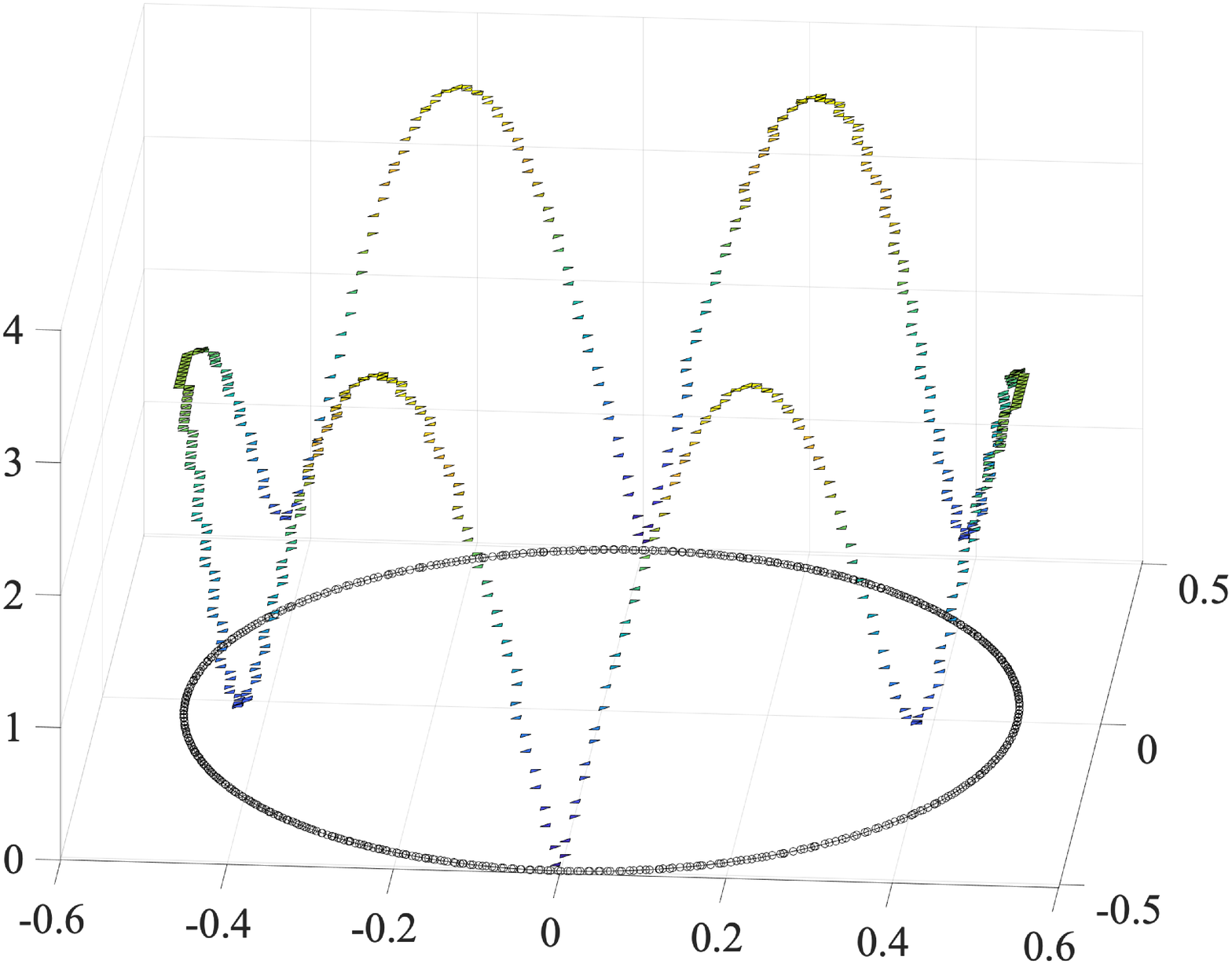}
\caption{Elevation of the discrete and interpolated exact multiplier $\blambda$ on a particular mesh. \label{fig:lambdaelev}}  

\end{center}
\end{figure}
\begin{figure}
\begin{center}

\includegraphics[scale=0.3]{./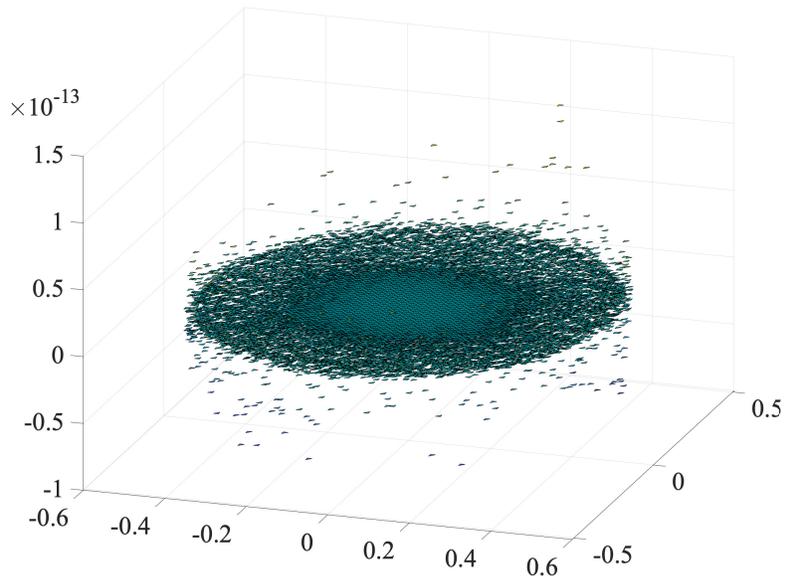}
\caption{Computed divergence on a particular mesh. \label{fig:divergence}}  

\end{center}
\end{figure}

\begin{figure}
\begin{center}

\includegraphics[scale=0.2]{./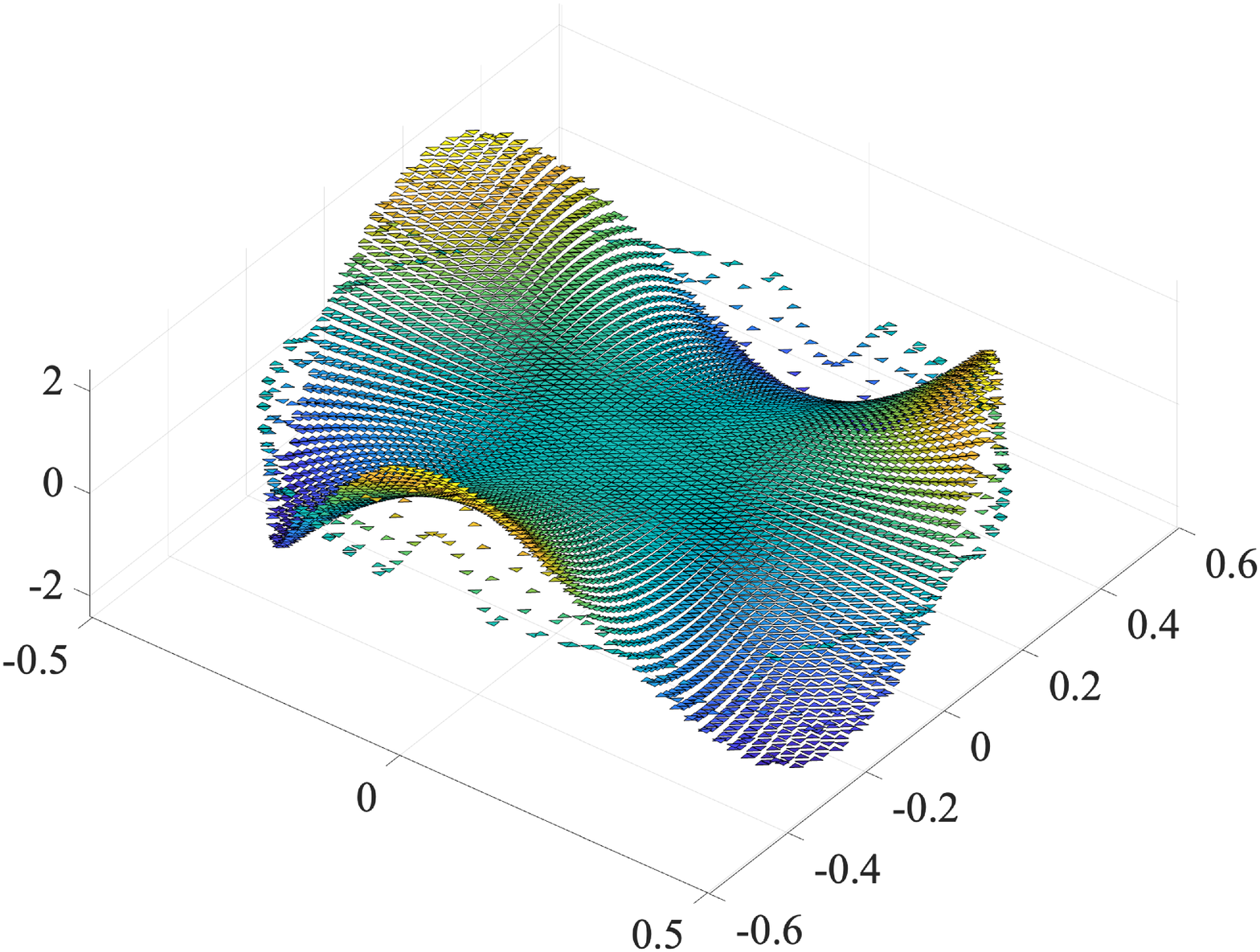}\includegraphics[scale=0.2]{./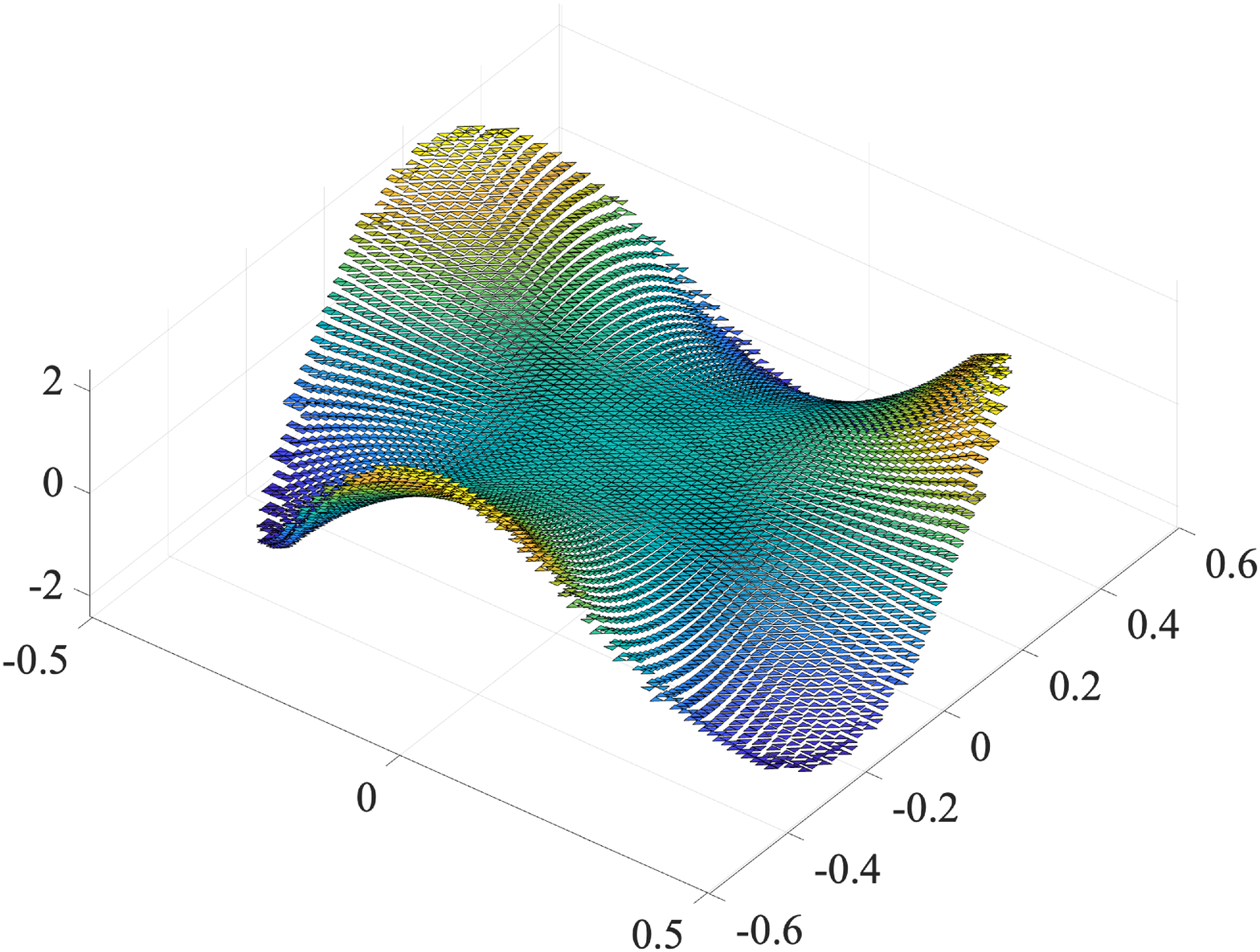}

\caption{Pressure and extended pressure on a particular mesh. \label{fig:extension}}  

\end{center}
\end{figure}

\begin{figure}
\begin{center}

\includegraphics[scale=0.3]{./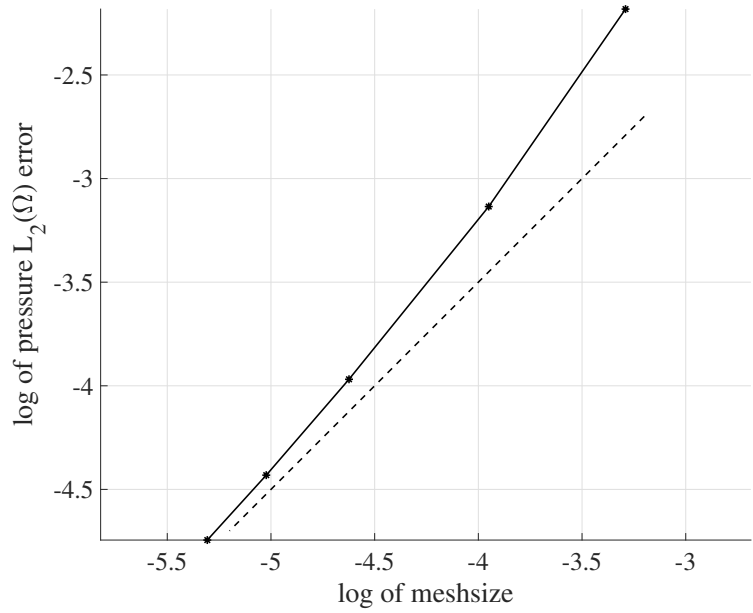}
\caption{Convergence of the extended pressure on $\Omega$. \label{fig:pressure2}}  

\end{center}
\end{figure}

\begin{figure}
\begin{center}

\includegraphics[scale=0.23]{./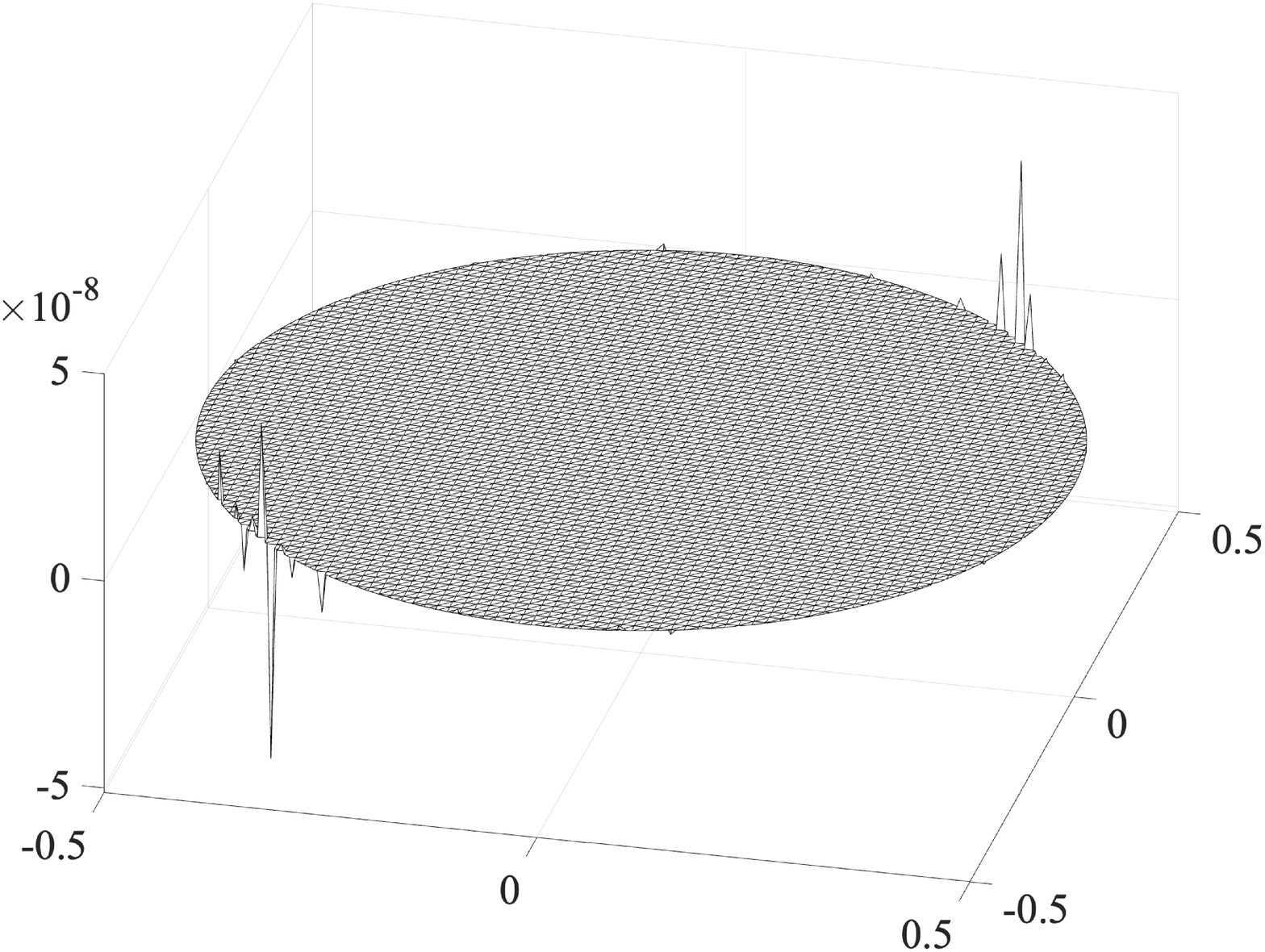}\includegraphics[scale=0.23]{./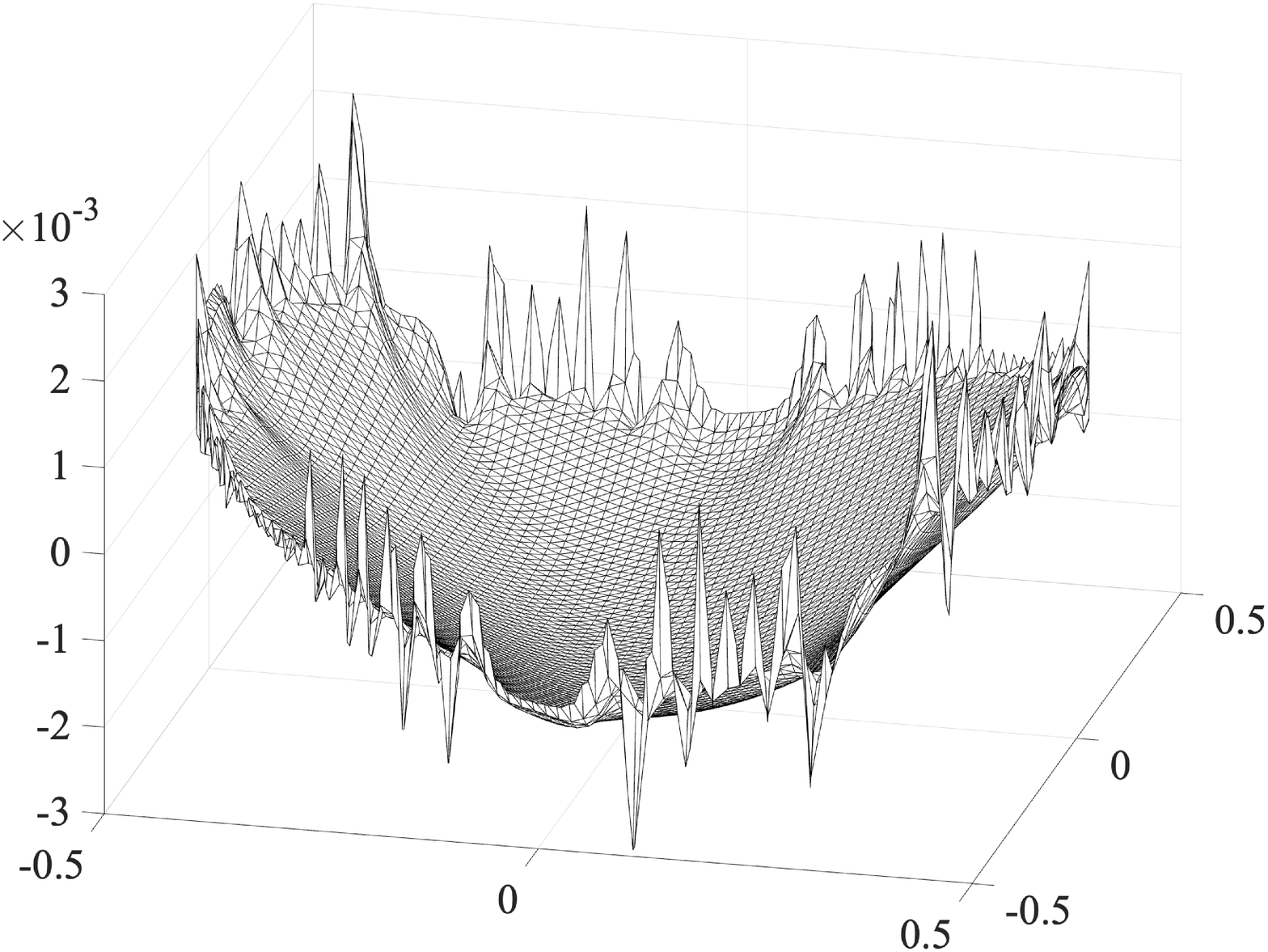}

\caption{Velocity $u_y$ on a particular mesh. $\omega = 0$ (left) and $\omega = 100$ (right).  \label{fig:1000}}  

\end{center}
\end{figure}
\begin{figure}
\begin{center}

\includegraphics[scale=0.23]{./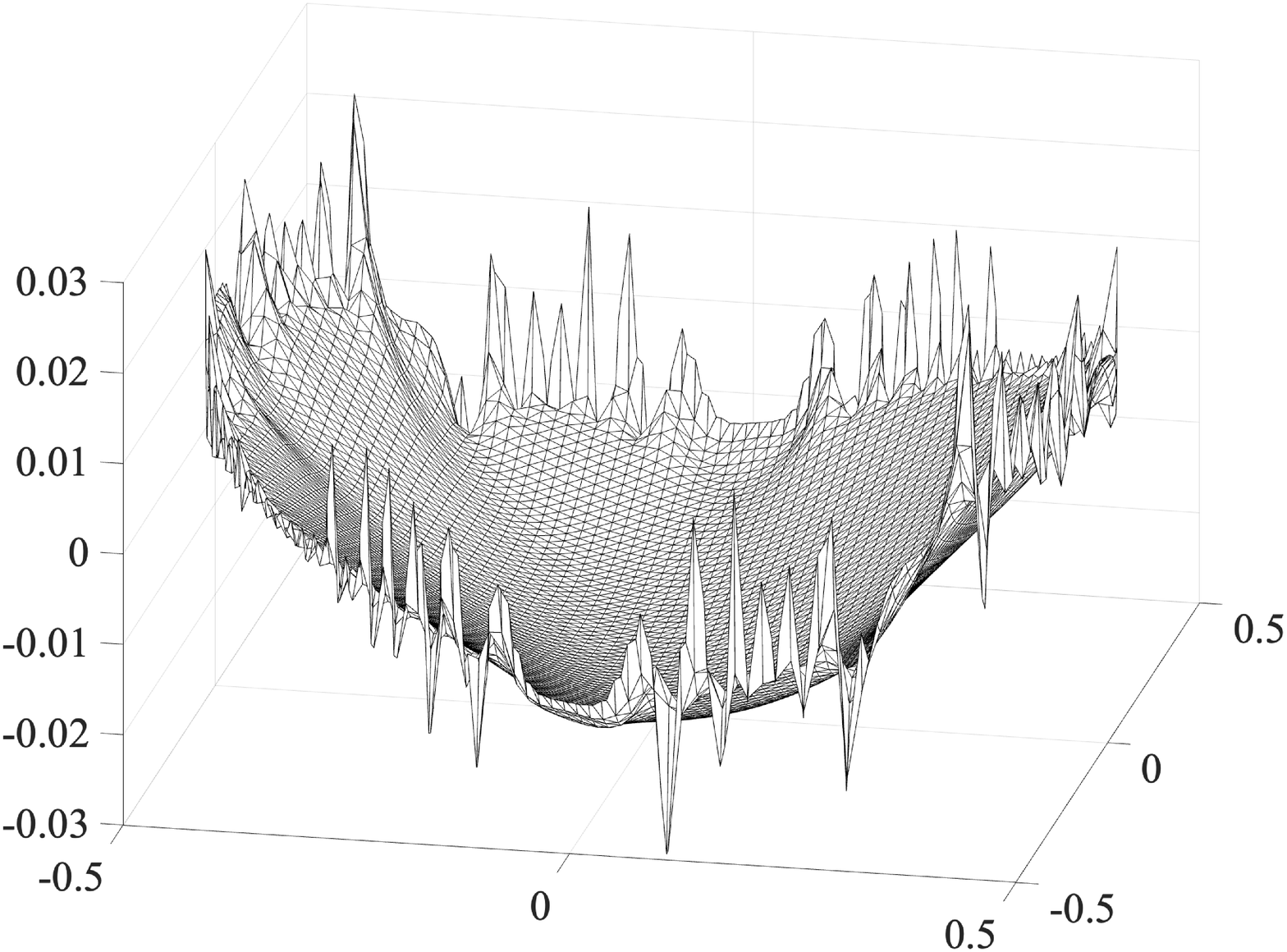}\includegraphics[scale=0.23]{./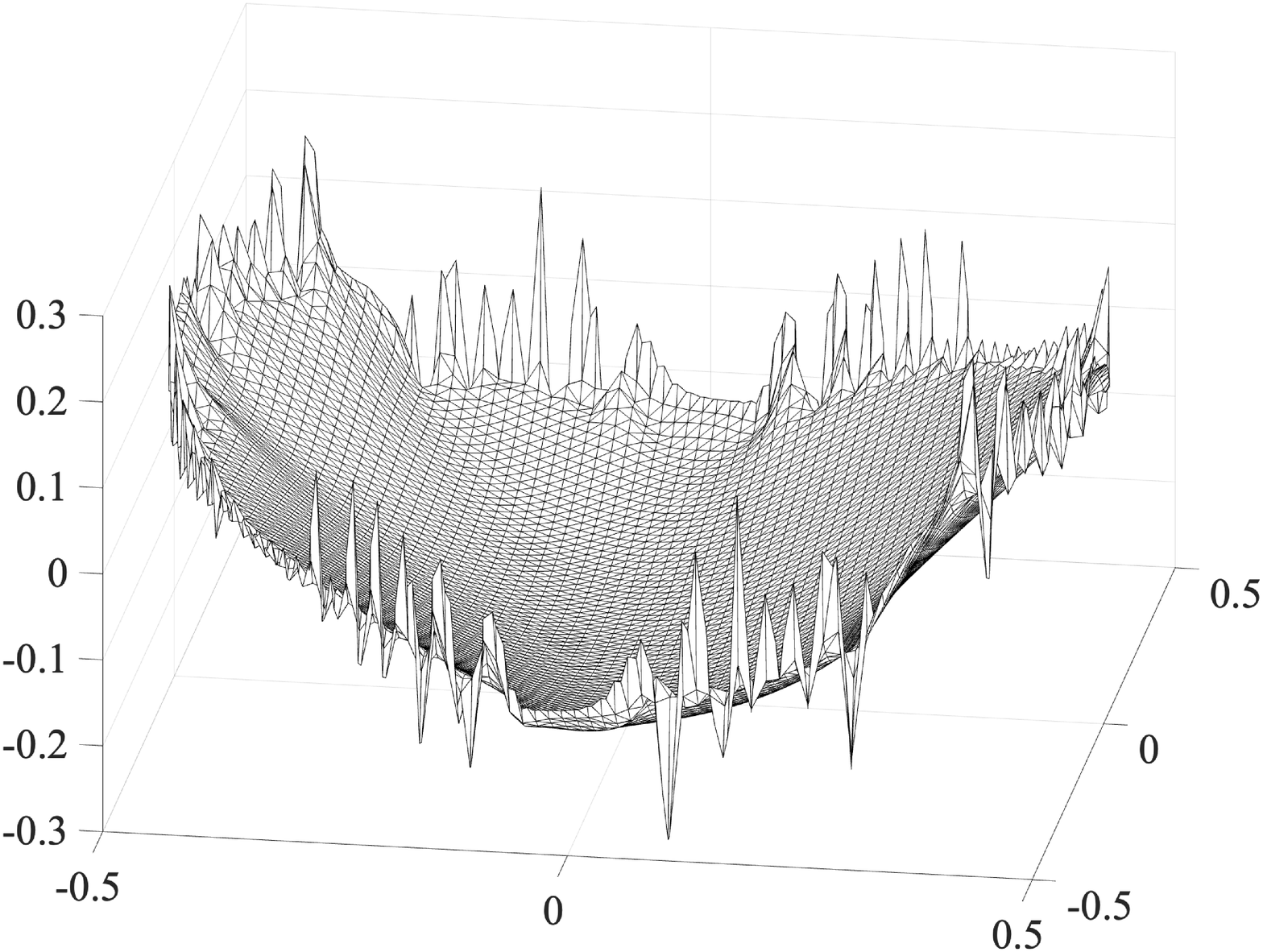}

\caption{Velocity $u_y$ on a particular mesh. $\omega = 1000$ (left) and $\omega = 10000$ (right).  \label{fig:10000}}  

\end{center}
\end{figure}

\end{document}